\documentclass[final,1p]{elsarticle}
\usepackage{fancyhdr, graphicx}
\usepackage{amsmath,amssymb,amsthm,mathrsfs,geometry}
\usepackage{latexsym}
\usepackage{color}
\usepackage{subfig}
\usepackage{algpseudocode}
\usepackage{algorithm}
\usepackage{epstopdf}
\usepackage{multirow}
\usepackage{cases}
\usepackage[colorlinks,linkcolor=blue,hyperindex]{hyperref}

\usepackage[pagewise]{lineno}
\newtheorem{theorem}{Theorem}

\newtheorem{remark}{Remark}
\newtheorem{corollary}{Corollary}
\newtheorem{definition}{Definition}

\numberwithin{equation}{section}
\usepackage{epsfig}
\usepackage{subfig}

\journal{ }
\begin{document}
\begin{frontmatter}

\title{Inverse Source Problem for a Time-Fractional Diffusion-Wave Equation with a Singular Inverse-Square Potential}

\author[gzmtu]{Zewen Wang}
\ead{zwwang6@163.com; wangzewen@gzmtu.edu.cn}

\author[NUIST]{Bin Wu}
\ead{binwu@nuist.edu.cn}

\author[KyotoU]{Yikan Liu}
\ead{liu.yikan.8z@kyoto-u.ac.jp}

\author[ecut]{Liangwei Jin}
\ead{3069374616@qq.com}

\author[gzmtu,gzmtuAI]{Shufang Qiu}
\ead{qiushufang@gzmtu.edu.cn}

\cortext[cor1]{Corresponding author.}

\address[gzmtu]{School of Arts and Sciences, Guangzhou Maritime University, Guangzhou, Guangdong, 510725, P.R. China}

\address[NUIST]{School of Mathematics and Statistics, Nanjing University of
Information Science and Technology, Nanjing 210044, P. R. China}

\address[KyotoU]{Department of Mathematics, Kyoto University, Kitashirakawa-Oiwakecho, Sakyo-ku, Kyoto 606-8502, Japan.}

\address[ecut]{School of Science, East China University of Technology, Nanchang, Jiangxi, 330013, P. R. China}

\address[gzmtuAI]{School of Artificial Intelligence, Guangzhou Maritime University, Guangzhou, Guangdong, 510725, China}

\begin{abstract}
This paper investigates an inverse source problem for a time-fractional diffusion-wave equation with a singular inverse-square potential. The source term is assumed to consist of a known temporal factor and an unknown spatial component, which is to be recovered from terminal-state measurements. The well-posedness and regularity of the forward problem are established within an appropriate energy framework by exploiting Hardy-type inequalities and the spectral properties of the associated singular elliptic operator. The terminal observation operator is then shown to be compact, and uniqueness of the spatial source is established under a suitable nondegeneracy condition on the temporal factor. To stabilize the resulting ill-posed inverse problem, a Tikhonov regularization approach is introduced. The gradient of the regularized functional is derived through an adjoint problem involving a right-sided fractional derivative, leading to an adjoint-based conjugate gradient method with an exact line search for the numerical reconstruction of the unknown source. Numerical experiments are conducted on both one‑ and two‑dimensional spatial domains, using both exact and noisy terminal data, to demonstrate the effectiveness and stability of the proposed source reconstruction method. 
\end{abstract}

\begin{keyword} 
Time-fractional diffusion-wave equation; inverse source problem; inverse-square potential; Tikhonov regularization;  conjugate gradient method.
\end{keyword}

\end{frontmatter}


\section{Introduction} \label{sec:introduction}

Fractional differential equations have become an important modeling tool for
evolution processes exhibiting memory, hereditary effects, or anomalous
transport. In contrast to classical diffusion and wave equations, the use of a
non-integer-order time derivative provides a unified framework that interpolates
between diffusive and wave-like dynamics. In particular, a time derivative of
order between zero and one leads to a subdiffusion model, whereas an order
between one and two gives rise to a fractional diffusion-wave equation
\cite{SchneiderWyss1989,SakamotoYamamoto2011}. The latter regime retains
long-memory effects while allowing propagation behavior that is not present in
standard subdiffusion. The analytical and numerical treatment of such equations
has attracted considerable attention; we refer, for example, to
\cite{SakamotoYamamoto2011,JinB2023} for the well-posedness, spectral
representation, regularity, and numerical analysis of time-fractional evolution
equations.

Inverse problems associated with fractional evolution equations arise when an
unknown coefficient, source, initial state, or fractional order must be
determined from indirect measurements. They are important in applications where
the internal forcing mechanism cannot be observed directly. At the same time,
they are typically ill-posed because the forward evolution smooths or weakens
part of the information carried by the unknown quantity. An overview of the
distinctive uniqueness, stability, and ill-posedness properties of inverse
problems for anomalous diffusion processes can be found in
\cite{JinRundell2015}.

Among the various inverse problems for time-fractional equations, the recovery
of a spatially dependent source from final-time data has been studied by means
of spectral regularization, Tikhonov minimization, quasi-boundary value
methods, and iterative algorithms. Jiang, Liu, and Wang
\cite{JiangD2020} formulated the reconstruction of the spatial component of a
separable source in a time-fractional diffusion equation as a stabilized
optimization problem and developed a fully discrete reconstruction method.
Wang et al.\ \cite{WangZ2023} proposed an exponential Tikhonov
regularization method and analyzed its convergence properties. For
fractional diffusion-wave equations, Wei and Luo
\cite{WeiT2022} introduced a generalized quasi-boundary value method for
recovering a spatial source, while Luo and Wei \cite{LuoY2024} subsequently
established uniqueness and developed a regularized numerical method based on a
final-time observation. These studies demonstrate that terminal measurements
can contain sufficient information for source identification, but also show
that regularization is indispensable for obtaining stable reconstructions.

The situation becomes more delicate when the spatial operator contains an
inverse-square potential. Such potentials represent a critical class of
singular lower-order terms and arise naturally in mathematical physics,
quantum mechanics, combustion models, and evolution equations with singular
geometry. The classical analysis of heat equations with singular potentials
was initiated by Baras and Goldstein \cite{BarasGoldstein1984}. The fundamental
role of the Hardy inequality and the critical threshold of the inverse-square
potential was clarified by Vázquez and Zuazua
\cite{VazquezJL2000}. A considerable literature has subsequently developed around direct, control, and inverse problems for parabolic equations with singular inverse-square
potentials. Vancostenoble \cite{VancostenobleJ2011} obtained Lipschitz stability
for an inverse source problem associated with a singular parabolic equation.
Cazacu \cite{CazacuC2014} studied controllability of the heat equation with a
boundary-localized inverse-square singularity. Related inverse source and
coefficient problems have been investigated for singular or degenerate
parabolic systems in
\cite{AlaouiM2021,AnhCT2022,QinX2023,QinX2025}. In particular, Qin and Li
\cite{QinX2025} established stability results for the recovery of a spatial
source in a singular parabolic equation with variable coefficients. An
optimal-control formulation and a Landweber-type reconstruction procedure for
a classical singular diffusion equation were recently considered in
\cite{NedjmaM2025}. From the numerical perspective, Ma and Chen
\cite{MaChen2024} developed an efficient finite difference and spectral
approximation for a time-fractional diffusion equation with an inverse-square
potential.

Despite these advances, most inverse source results for fractional
diffusion-wave equations have been obtained for spatial operators with regular
coefficients, whereas most studies involving inverse-square potentials concern
classical parabolic equations or fractional subdiffusion models. A particularly
relevant contribution is \cite{ChattouhA}, where the identification of a
spatial source in a time-fractional diffusion equation with an inverse-square
potential was analyzed in the subdiffusive regime. However, the extension to the diffusion-wave regime is not
straightforward. When the fractional order lies between one and two, two
initial conditions are required, the solution representation contains an
additional initial-velocity term, and the fractional integration-by-parts
formula generates a different set of terminal conditions for the adjoint
equation. Moreover, several positivity and monotonicity properties commonly
used in the subdiffusive case are no longer directly available.

Motivated by these observations, this paper investigates an inverse source problem for a time-fractional diffusion-wave equation with a singular inverse-square potential. The source is assumed to consist of a known term and a separable unknown component, whose temporal factor is prescribed and whose spatial factor is to be recovered from a terminal-state measurement. In contrast to the fractional subdiffusion setting, the diffusion-wave regime considered here involves two initial conditions and leads to a different adjoint structure in the inverse analysis. The singular coefficient is restricted to the subcritical Hardy regime, which provides the coercivity and spectral properties required for the analysis of the associated spatial operator.

The remainder of this paper is organized as follows. Section 2 introduces the mathematical model and presents the preliminary results and functional framework needed for the subsequent analysis. Section 3 is devoted to the forward problem, where its well-posedness and regularity are established. In Section 4, the inverse source problem is formulated and the uniqueness of the unknown spatial source is investigated. Section 5 presents the Tikhonov regularization framework and develops the corresponding conjugate gradient reconstruction method. Numerical experiments are reported in Section 6 to illustrate the accuracy and stability of the proposed reconstruction method. Finally, Section 7 concludes the paper.

\section{Model and Preliminaries}

\subsection{Problem setting}

Let $\Omega=(0,1)$ and $\Omega=\Omega\times(0,T)$ with $T>0$. We consider the following initial-boundary value problem for the time-fractional diffusion-wave equation:
\begin{equation}\label{eq:main}
\begin{cases}
{}_0^C D_t^\alpha u(x,t)
-u_{xx}(x,t)
-\dfrac{\mu}{x^2}u(x,t)
=p(x)q(t),
& (x,t)\in\Omega\times(0,T),\\[3pt]
u(x,0)=u_0(x),\quad
\partial_tu(x,0)=u_1(x),
& x\in\Omega,\\[3pt]
u(0,t)=u(1,t)=0,
& t\in(0,T),
\end{cases}
\end{equation}
where $1<\alpha<2$ and ${}_0^C D_t^\alpha$ denotes the left-sided Caputo fractional derivative of order $\alpha$, defined by
$$
{}_0^C D_t^\alpha u(t)
=
\frac{1}{\Gamma(2-\alpha)}
\int_0^t
(t-s)^{1-\alpha}u''(s)\,ds
=
I_{0+}^{2-\alpha}u''(t).
$$
Here, $I_{0+}^{2-\alpha}$ denotes the fractional integral of order $2-\alpha$ in the Riemann--Liouville sense.

When all parameters and functions appearing in \eqref{eq:main} are prescribed, solving for the unknown solution $u$ constitutes the \textbf{forward problem}. In what follows, however, we are concerned with an associated \textbf{inverse problem}, in which the temporal factor $q\in L^\infty(0,T)$ is assumed to be known a priori, while the spatial factor $p\in L^2(\Omega)$ is unknown and to be determined. As the observable data, we take the terminal state of the solution:
\begin{equation}\label{eq:obs}
u(x,T)=\omega_{\mathrm{obs}}(x),
\quad x\in\Omega.
\end{equation}
The inverse problem then consists in recovering the unknown spatial source $p$ from the terminal observation \eqref{eq:obs}. To define and analyze the associated source-to-terminal map, we first establish the well-posedness and regularity of the forward problem \eqref{eq:main} for a given source term $p\in L^2(\Omega)$.

\subsection{Hardy energy space and the singular elliptic operator}

Let $\mu^\ast=\frac{1}{4}$ be the critical constant in the one-dimensional Hardy inequality, and throughout this paper we assume  
$$
\mu<\mu^\ast.
$$
For $v,w\in H_0^1(\Omega)$, we define the symmetric bilinear form
\begin{equation}\label{eq:amu}
a_\mu(v,w)
:=
\int_0^1 v_x w_x \, dx
-
\mu\int_0^1\frac{vw}{x^2}\,dx .
\end{equation}
The one-dimensional Hardy inequality \cite{VazquezJL2000,SuD2012}
\begin{equation}\label{eq:hardy}
\int_0^1\frac{|v(x)|^2}{x^2}\,dx
\le
4\int_0^1|v_x(x)|^2\,dx,
\quad
v\in H_0^1(0,1),
\end{equation}
ensures that $a_\mu$ is both continuous and coercive on $H_0^1(\Omega)$. More precisely, there exist constants $c_\mu>0$ and $C_\mu>0$ such that
\begin{equation}\label{eq:form-equivalence}
c_\mu\|v\|_{H_0^1(\Omega)}^2
\le
a_\mu(v,v)
\le
C_\mu\|v\|_{H_0^1(\Omega)}^2,
\quad
v\in H_0^1(\Omega).
\end{equation}

Indeed, for all $v,w\in H_0^1(\Omega)$, applying the Hardy inequality yields
$$
\begin{aligned}
|a_\mu(v,w)|
&\le
\|v_x\|_{L^2}\|w_x\|_{L^2}
+
|\mu|
\left\|\frac{v}{x}\right\|_{L^2}
\left\|\frac{w}{x}\right\|_{L^2}
\\
&\le
(1+4|\mu|)
\|v_x\|_{L^2}
\|w_x\|_{L^2},
\end{aligned}
$$
which establishes the continuity of $a_\mu$ on $H_0^1(\Omega)\times H_0^1(\Omega)$. For the coercivity, note that
$$
a_\mu(v,v)
=
\|v_x\|_{L^2}^2
-
\mu
\left\|\frac{v}{x}\right\|_{L^2}^2 .
$$
If $0\le \mu<1/4$, then Hardy's inequality implies
$$
a_\mu(v,v)
\ge
(1-4\mu)\|v_x\|_{L^2}^2.
$$
If $\mu<0$, we have
$$
a_\mu(v,v)
=
\|v_x\|_{L^2}^2
+
|\mu|
\left\|\frac{v}{x}\right\|_{L^2}^2
\ge
\|v_x\|_{L^2}^2.
$$
Thus, for every $\mu<1/4$, there exists a constant $c_\mu>0$ such that
$$
a_\mu(v,v)
\ge
c_\mu\|v\|_{H_0^1(\Omega)}^2,
\quad
v\in H_0^1(\Omega).
$$

Consequently, we introduce the notation
$$
H_0^{1,\mu}(\Omega)
:=
H_0^1(\Omega),
$$
where the latter space is understood to be equipped with the equivalent Hardy energy norm
\begin{equation}\label{eq:mu-norm}
\|v\|_\mu
:=
a_\mu(v,v)^{1/2}.
\end{equation}
Equivalently, $H_0^{1,\mu}(\Omega)$ may be defined as the completion of $C_c^\infty(\Omega)$ with respect to the norm $\|\cdot\|_\mu$. In the subcritical regime $\mu<1/4$, this space coincides with $H_0^1(\Omega)$ as a set, and the two norms are equivalent.

Since $a_\mu$ is densely defined, symmetric, closed, and coercive on $L^2(\Omega)$, the representation theorem for closed quadratic forms ensures the existence of a unique positive self-adjoint operator $L_\mu$ in $L^2(\Omega)$, characterized by
\begin{equation}\label{eq:Lmu-domain}
D(L_\mu)\!
:=\!
\left\{
v\in H_0^{1,\mu}(\Omega):\!
\exists\,g\in L^2(\Omega)
\text{ such that }
a_\mu(v,\varphi)\!=\!(g,\varphi)_{L^2(\Omega)}
\ \text{for all }\varphi\in H_0^{1,\mu}(\Omega)
\right\},
\end{equation}
with $L_\mu v=g$. In the distributional sense, $L_\mu$ corresponds to the Friedrichs realization of the differential operator
$$
-\Delta-\frac{\mu}{x^2}
$$
subject to the homogeneous Dirichlet boundary condition.

Since the embedding
$$
H_0^{1,\mu}(\Omega)\hookrightarrow L^2(\Omega)
$$
is compact, the operator $L_\mu$ possesses compact resolvent. Consequently, there exists an orthonormal basis $\{\phi_n\}_{n\ge1}$ of $L^2(\Omega)$ consisting of eigenfunctions of $L_\mu$, with corresponding eigenvalues
$$
0<\lambda_1\le\lambda_2\le\cdots,
\quad
\lambda_n\to\infty,
$$
such that
\begin{equation}\label{eq:eigen}
L_\mu\phi_n=\lambda_n\phi_n.
\end{equation}

For $\sigma\ge0$, we define the Hilbert scale
$$
\mathcal H^\sigma(\Omega)
:=
D(L_\mu^{\sigma/2}),
\quad
\|v\|_{\mathcal H^\sigma}^2
=
\sum_{n=1}^{\infty}
\lambda_n^\sigma
|(v,\phi_n)_{L^2(\Omega)}|^2 .
$$
For $\sigma<0$, the space $\mathcal H^\sigma(\Omega)$ is defined as the completion of $L^2(\Omega)$ with respect to the norm
$$
\|v\|_{\mathcal H^\sigma}^2
=
\sum_{n=1}^{\infty}
\lambda_n^\sigma
|(v,\phi_n)_{L^2(\Omega)}|^2,
$$
or equivalently, via duality, $\mathcal H^\sigma(\Omega)=\bigl(\mathcal H^{-\sigma}(\Omega)\bigr)'$ with $L^2(\Omega)$ serving as the pivot space. In particular, we recover the identities
$$
\mathcal H^1(\Omega)=H_0^{1,\mu}(\Omega),
\quad
\mathcal H^{-1}(\Omega)=\bigl(H_0^{1,\mu}(\Omega)\bigr)',
$$
and the norm equivalence
\begin{equation}\label{eq:H1-mu}
\|v\|_{\mathcal H^1(\Omega)}^2
=
\sum_{n=1}^{\infty}
\lambda_n|(v,\phi_n)_{L^2(\Omega)}|^2
=
a_\mu(v,v)
=
\|v\|_\mu^2.
\end{equation}

\section{Forward Problem: Well-Posedness and Regularity}

For a fixed $p\in L^2(\Omega)$, set
\begin{equation}\label{eq:def-f}
f(t):=p\,q(t).
\end{equation}
Since $q\in L^\infty(0,T)$,
$$
f\in L^\infty(0,T;L^2(\Omega))
\subset L^2(0,T;L^2(\Omega)),
$$
and
\begin{equation}\label{eq:f-estimate}
\|f\|_{L^2(0,T;L^2(\Omega))}
\le
T^{1/2}
\|p\|_{L^2(\Omega)}
\|q\|_{L^\infty(0,T)}.
\end{equation}

\subsection{Weak solution and spectral representation}

We first specify the solution concept used throughout the paper.
The equation is understood weakly only with respect to the spatial
variable; no global space--time variational reformulation is required.

\begin{definition}[Weak solution]\label{def:weak-solution}
Let $u_0\in\mathcal H^1(\Omega)$, $u_1\in L^2(\Omega)$. 
A function $u$ is called a weak solution of \eqref{eq:main} if
$$
u\in C([0,T];\mathcal H^1(\Omega))
\cap
C^1([0,T];\mathcal H^{-1}(\Omega)),
\quad 
{}_0^C D_t^\alpha u
\in
L^2(0,T;\mathcal H^{-1}(\Omega)),
$$
and
$$
u(\cdot,0)=u_0
\quad\text{in }\mathcal H^1(\Omega),
\quad
\partial_tu(\cdot,0)=u_1
\quad\text{in }\mathcal H^{-1}(\Omega).
$$
Moreover, for a.e. $t\in(0,T)$,
\begin{equation}\label{eq:weak-form}
\left\langle
{}_0^C D_t^\alpha u(t),v
\right\rangle_{\mathcal H^{-1},\mathcal H^1}
+
a_\mu(u(t),v)
=
(f(t),v)_H,
\quad
\forall v\in\mathcal H^1(\Omega).
\end{equation}
\end{definition}

The identity \eqref{eq:weak-form} is the spatial weak form of the
evolution equation. It is used to characterize the solution, whereas
existence and regularity will be established below by the spectral
representation of $L_\mu$.

For
$$
u_{0,n}:=(u_0,\phi_n)_{L^2},
\quad
u_{1,n}:=(u_1,\phi_n)_{L^2},
\quad
f_n(t):=(f(t),\phi_n)_{L^2},
$$
the $n$th Fourier coefficient of a solution satisfies
\begin{equation}\label{eq:modal}
{}_0^C D_t^\alpha u_n(t)
+
\lambda_nu_n(t)
=
f_n(t),
\end{equation}
with $u_n(0)=u_{0,n}$, $u_n'(0)=u_{1,n}$. The variation-of-constants formula gives
\begin{equation}\label{eq:modal-repr}
\begin{aligned}
u_n(t)
={}&
u_{0,n}
E_{\alpha,1}(-\lambda_nt^\alpha)
+
u_{1,n}
tE_{\alpha,2}(-\lambda_nt^\alpha)
\\
&+
\int_0^t
(t-s)^{\alpha-1}
E_{\alpha,\alpha}
\bigl(-\lambda_n(t-s)^\alpha\bigr)
f_n(s)\,ds .
\end{aligned}
\end{equation}
Accordingly, we introduce
\begin{equation}\label{eq:solution-repr}
\begin{aligned}
u(t)
={}&
E_{\alpha,1}(-t^\alpha L_\mu)u_0
+
tE_{\alpha,2}(-t^\alpha L_\mu)u_1
\\
&+
\int_0^t
(t-s)^{\alpha-1}
E_{\alpha,\alpha}
\bigl(-(t-s)^\alpha L_\mu\bigr)
f(s)\,ds ,
\end{aligned}
\end{equation}
where the operator-valued Mittag--Leffler functions are defined
spectrally. For instance,
$$
E_{\alpha,\beta}(-t^\alpha L_\mu)v
=
\sum_{n=1}^{\infty}
E_{\alpha,\beta}(-\lambda_nt^\alpha)
(v,\phi_n)_{L^2}\phi_n.
$$

This representation is the basic tool for both the analysis of the
forward problem and the subsequent inverse source problem.

\subsection{Existence, uniqueness, and regularity}

\begin{theorem}[Well-posedness]\label{thm:wellposed}
Let $1<\alpha<2$, $\mu<\frac{1}{4}$, $p\in L^2(\Omega)$, $q\in L^\infty(0,T)$, and assume
$u_0\in\mathcal H^1(\Omega)$, $u_1\in L^2(\Omega)$. 
Then problem \eqref{eq:main} admits a unique weak solution in the sense
of Definition~\ref{def:weak-solution}. In particular,
\begin{equation}\label{eq:regularity-u}
u\in
C([0,T];\mathcal H^1(\Omega))
\cap
C^1([0,T];\mathcal H^{-1}(\Omega)),
\end{equation}
and
\begin{equation}\label{eq:regularity-caputo}
{}_0^C D_t^\alpha u
\in
L^2(0,T;\mathcal H^{-1}(\Omega)).
\end{equation}
Moreover,
\begin{equation}\label{eq:apriori-strong}
\begin{aligned}
&
\|u\|_{C([0,T];\mathcal H^1)}
+
\|\partial_tu\|_{C([0,T];\mathcal H^{-1})}
+
\|{}_0^C D_t^\alpha u\|_
{L^2(0,T;\mathcal H^{-1})}
\\
&\quad
\le
C
\left(
\|u_0\|_{\mathcal H^1}
+
\|u_1\|_{L^2}
+
\|p\|_{L^2(\Omega)}
\|q\|_{L^\infty(0,T)}
\right),
\end{aligned}
\end{equation}
where $C>0$ depends only on
$\alpha$, $T$, $\mu$, and $\Omega$.
\end{theorem}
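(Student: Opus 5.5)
Our plan is to build the solution from the spectral representation \eqref{eq:solution-repr}, prove each regularity statement mode by mode with uniform estimates, and then obtain uniqueness from the modal equations \eqref{eq:modal}. We will use the standard Mittag--Leffler bounds for $1<\alpha<2$ and $\beta>0$:
\begin{equation*}
|E_{\alpha,\beta}(-z)|\le \frac{C}{1+z}, \qquad z\ge0 .
\end{equation*}
We will also use the differentiation identities
\begin{equation*}
\frac{d}{dt}E_{\alpha,1}(-\lambda t^\alpha)=-\lambda t^{\alpha-1}E_{\alpha,\alpha}(-\lambda t^\alpha),
\qquad
\frac{d}{dt}\bigl(tE_{\alpha,2}(-\lambda t^\alpha)\bigr)=E_{\alpha,1}(-\lambda t^\alpha).
\end{equation*}
For the Duhamel kernel, the derivative of $t^{\alpha-1}E_{\alpha,\alpha}(-\lambda t^\alpha)$ equals $t^{\alpha-2}E_{\alpha,\alpha-1}(-\lambda t^\alpha)$.

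\textbf{Step 1: estimates on $u$ in $\mathcal H^1$.}
\begin{itemize}
\item \emph{Initial displacement.} The bound $|E_{\alpha,1}|\le C$ gives $\|E_{\alpha,1}(-t^\alpha L_\mu)u_0\|_{\mathcal H^1}\le C\|u_0\|_{\mathcal H^1}$.
\item \emph{Initial velocity.} We use $\lambda_n^{1/2}t|E_{\alpha,2}(-\lambda_nt^\alpha)|\le C\lambda_n^{1/2}t/(1+\lambda_nt^\alpha)$. Maximizing over $t$ gives a factor $\lambda_n^{1/2-1/\alpha}t^{\,1-\alpha/2}\cdot(\text{bounded})$. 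Since $\alpha>1$, the exponent $1/2-1/\alpha$ is below $1/2$. A cleaner route is $\lambda_n^{1/2}t\,(1+\lambda_nt^\alpha)^{-1}\le t^{1-\alpha/2}\sup_{z}z^{1/2}/(1+z)\le CT^{1-\alpha/2}$. This yields $\|u_1\|_{L^2}$ control uniformly in $t$.
\item \emph{Source term.} For the $n$th mode we apply Cauchy--Schwarz/Young. We bound $\lambda_n^{1/2}\int_0^t s^{\alpha-1}|E_{\alpha,\alpha}(-\lambda_ns^\alpha)|\,|f_n(t-s)|\,ds$ by $\|f_n\|_{L^\infty}\,\lambda_n^{1/2}\int_0^\infty s^{\alpha-1}(1+\lambda_ns^\alpha)^{-1}ds$. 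This integral diverges logarithmically at infinity, so we must use the finite interval $[0,T]$. We will therefore proceed in one of two ways. First, we can split at $s=\lambda_n^{-1/\alpha}$, which gives $\lambda_n^{1/2}\bigl(\lambda_n^{-1}+\lambda_n^{-1}\log(1+\lambda_nT^\alpha)\bigr)\le C$. Alternatively, and more simply, we use $\lambda_n^{1/2}s^{\alpha-1}(1+\lambda_ns^\alpha)^{-1}\le s^{\alpha/2-1}$, which is integrable on $(0,T)$. Since $f_n(t)=p_nq(t)$, we get $|\lambda_n^{1/2}u_n^{f}(t)|\le C\|q\|_\infty|p_n|$, and summing over $n$ gives $\|p\|_{L^2}\|q\|_{L^\infty}$.
\item \emph{Continuity in time.} Each mode is continuous, and the tails are dominated uniformly in $t$. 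By Weierstrass M-test type arguments in $\ell^2$, this gives $u\in C([0,T];\mathcal H^1)$.
\end{itemize}

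\textbf{Step 2: estimates on $\partial_tu$ in $\mathcal H^{-1}$.} We differentiate termwise using the identities above.
\begin{itemize}
\item \emph{Initial displacement.} $\lambda_n^{-1/2}\cdot\lambda_nt^{\alpha-1}|E_{\alpha,\alpha}|\le C\lambda_n^{1/2}t^{\alpha-1}/(1+\lambda_nt^\alpha)\le Ct^{\alpha/2-1}$, which blows up at $t=0$. So here we instead keep $\lambda_n^{1/2}$ attached to $u_{0,n}$. This gives $\lambda_n^{-1/2}|u_{0,n}|\lambda_nt^{\alpha-1}/(1+\lambda_nt^\alpha)\le \lambda_n^{1/2}|u_{0,n}|\cdot t^{\alpha-1}/(1+\lambda_n t^\alpha)\le CT^{\alpha-1}\lambda_n^{1/2}|u_{0,n}|$, which is fine.
\item \emph{Initial velocity.} This contributes $E_{\alpha,1}u_{1,n}$, which is bounded in $L^2\subset\mathcal H^{-1}$.
\item \emph{Source term.} This contributes $\int_0^t s^{\alpha-2}E_{\alpha,\alpha-1}(-\lambda_n s^\alpha)f_n(t-s)ds$. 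Its absolute value is at most $C\|q\|_\infty|p_n|\int_0^T s^{\alpha-2}ds$, which is finite since $\alpha>1$.
\item \emph{Continuity and initial condition.} Continuity follows as before, and $\partial_tu(0)=u_1$ in $\mathcal H^{-1}$.
\end{itemize}

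\textbf{Step 3: the Caputo derivative and uniqueness.}
\begin{itemize}
\item \emph{The Caputo derivative.} By \eqref{eq:modal}, ${}_0^CD_t^\alpha u_n=f_n-\lambda_nu_n$. Hence $\|{}_0^CD_t^\alpha u(t)\|_{\mathcal H^{-1}}\le\|f(t)\|_{\mathcal H^{-1}}+\|L_\mu u(t)\|_{\mathcal H^{-1}}\le C\|f(t)\|_{L^2}+\|u(t)\|_{\mathcal H^1}$. Squaring and integrating in time gives \eqref{eq:regularity-caputo} and the last part of \eqref{eq:apriori-strong}.
\item \emph{The weak form.} The weak form \eqref{eq:weak-form} follows by testing with $\phi_n$ and using density of the span of $\{\phi_n\}$ in $\mathcal H^1$.
\item \emph{Uniqueness.} The difference $w$ of two solutions has modes $w_n=(w(t),\phi_n)$. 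Testing \eqref{eq:weak-form} with $v=\phi_n$ gives ${}_0^CD_t^\alpha w_n+\lambda_nw_n=0$ with $w_n(0)=w_n'(0)=0$. Applying $I^\alpha_{0+}$ turns this into the Volterra equation $w_n=-\lambda_nI^\alpha w_n$, whose only solution is $w_n\equiv0$ by Gronwall-type arguments. Hence $w=0$.
\end{itemize}

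\textbf{Main obstacle.} The main obstacle is justifying termwise time-differentiation and the Caputo derivative in Steps 2--3. This requires $L^1$-in-time dominated bounds for the series of derivatives; these come from the kernel estimate $s^{\alpha-2}$, which is integrable only because $\alpha>1$. It also requires interchanging $I^{2-\alpha}_{0+}$ with the $\mathcal H^{-1}$-valued sum. We will handle both by proving convergence of partial sums in $C^1([0,T];\mathcal H^{-1})$, and of their Caputo derivatives in $L^2(0,T;\mathcal H^{-1})$, then passing to the limit using closedness of the Caputo derivative.
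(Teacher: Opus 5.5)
Your proposal is correct and follows essentially the same route as the paper. Both arguments use the spectral construction \eqref{eq:solution-repr} and the Mittag--Leffler bound $|E_{\alpha,\beta}(-r)|\le C/(1+r)$, which produces the weights $t^{1-\alpha/2}$, $t^{\alpha/2-1}$ and $t^{\alpha-2}$ in $\mathcal H^1$ and $\mathcal H^{-1}$. Both read off the Caputo derivative from ${}_0^C D_t^\alpha u=f-L_\mu u$, obtain the weak form by spectral truncation, and prove uniqueness mode by mode. The differences are cosmetic. You estimate the Duhamel term coefficientwise using $f_n=p_nq$, with a uniform-tail argument for continuity, where the paper uses the operator kernel $K\in L^1(0,T;\mathcal L(L^2,\mathcal H^1))$ and Bochner convolution. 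You also make the scalar uniqueness explicit via the Volterra equation $w_n=-\lambda_n I^\alpha w_n$, which the paper only asserts.
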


\begin{proof}
We construct the solution by \eqref{eq:solution-repr}.
For $1<\alpha<2$, the standard Mittag--Leffler estimates \cite{SakamotoYamamoto2011,KilbasAA2006} imply
$$
|E_{\alpha,\beta}(-r)|
\le
\frac{C}{1+r},
\quad
r\ge0,
$$
for the parameters appearing below.

First, consider the initial displacement term. Since $|E_{\alpha,1}(-\lambda_n t^\alpha)|\le C$, 
we obtain
$$
\sum_{n=1}^{\infty}
\lambda_n
| u_{0,n}E_{\alpha,1}(-\lambda_n t^\alpha)|^2
\le
C
\sum_{n=1}^{\infty}
\lambda_n| u_{0,n}|^2
=
C\| u_0\|_{\mathcal H^1(\Omega)}^2.
$$
Therefore,
$$
E_{\alpha,1}(-t^\alpha L_\mu) u_0:=
\sum_{n=1}^{\infty}
E_{\alpha,1}(-t^\alpha\lambda_n) u_{0,n}\phi_n
\in
L^\infty(0,T;\mathcal H^1(\Omega)).
$$
Moreover, for each $n$,
$E_{\alpha,1}(-\lambda_n t^\alpha)$ is continuous in $t$.
The previous bound provides a summable majorant, so by dominated
convergence,
$$
E_{\alpha,1}(-t^\alpha L_\mu) u_0
\in
C([0,T];\mathcal H^1(\Omega)).
$$

Next, consider the initial velocity term. By the Mittag--Leffler estimate,
$$
\lambda_n^{1/2}
\left|
tE_{\alpha,2}(-\lambda_n t^\alpha)
\right|
\le
C
\frac{\lambda_n^{1/2}t}{1+\lambda_n t^\alpha}.
$$
Since
$$
\sup_{\lambda>0}
\frac{\lambda^{1/2}t}{1+\lambda t^\alpha}
\le
Ct^{1-\alpha/2},
$$
we get
$$
\left\|
tE_{\alpha,2}(-t^\alpha L_\mu) u_1
\right\|_{\mathcal H^1(\Omega)}
\le
Ct^{1-\alpha/2}\| u_1\|_{L^2(\Omega)}.
$$
Because $1-\alpha/2>0$, this term tends to zero in
$\mathcal H^1(\Omega)$ as $t\to0^+$. Continuity for $t>0$ follows again
from dominated convergence on compact subintervals of $(0,T]$. Hence
$$
tE_{\alpha,2}(-t^\alpha L_\mu) u_1
\in
C([0,T];\mathcal H^1(\Omega)).
$$

Now consider the source term
$$
G(t)
:=
\int_0^t
(t-s)^{\alpha-1}
E_{\alpha,\alpha}(-(t-s)^\alpha L_\mu)f(s)\,ds .
$$
Define the operator kernel
$$
K(t):=
t^{\alpha-1}E_{\alpha,\alpha}(-t^\alpha L_\mu),
\quad
t>0.
$$
For $z\in L^2(\Omega)$, using the Mittag--Leffler estimate, we have
$$
\begin{aligned}
\|K(t)z\|_{\mathcal H^1}^2
&=
\sum_{n=1}^{\infty}
\lambda_n
\left|
t^{\alpha-1}
E_{\alpha,\alpha}(-\lambda_n t^\alpha)
(z,\phi_n)_{L^2}
\right|^2
\\
&\le
C
\left(
\sup_{\lambda>0}
\frac{\lambda^{1/2}t^{\alpha-1}}{1+\lambda t^\alpha}
\right)^2
\|z\|_{L^2}^2 .
\end{aligned}
$$
Since
$$
\sup_{\lambda>0}
\frac{\lambda^{1/2}t^{\alpha-1}}{1+\lambda t^\alpha}
\le
Ct^{\alpha/2-1},
$$
we get
$$
\|K(t)z\|_{\mathcal H^1}
\le
Ct^{\alpha/2-1}\|z\|_{L^2}.
$$
Because $\alpha/2-1\in(-1/2,0)$, the function
$t^{\alpha/2-1}$ belongs to $L^1(0,T)$. Therefore,
$$
K\in L^1(0,T;\mathcal L(L^2,\mathcal H^1)).
$$
Noting that $G(t)=\int_0^t K(t-s)f(s)\,ds$, 
we have
$$
\begin{aligned}
\|G(t)\|_{\mathcal H^1(\Omega)}
&\le
\int_0^t
\|K(t-s)f(s)\|_{\mathcal H^1(\Omega)}\,ds
\\
&\le
C\int_0^t
(t-s)^{\alpha/2-1}
\|f(s)\|_{L^2}\,ds
\\
&\le
C\|f\|_{L^\infty(0,T;L^2)}
\int_0^t
(t-s)^{\alpha/2-1}\,ds
\\
&=
C t^{\alpha/2}
\|f\|_{L^\infty(0,T;L^2)}.
\end{aligned}
$$
Therefore,
$$
\|G\|_{C([0,T];\mathcal H^1(\Omega))}
\le
C_T
\|f\|_{L^\infty(0,T;L^2)}\le
C
\|p\|_{L^2(\Omega)}
\|q\|_{L^\infty(0,T)}.
$$
Moreover, since
$$
\|G(t)\|_{\mathcal H^1(\Omega)}
\le
C t^{\alpha/2}
\|f\|_{L^\infty(0,T;L^2(\Omega))}
\longrightarrow 0
\quad
\text{as }t\to0^+,
$$
we have $G(0)=0$. By the continuity property of Bochner convolutions with an
$L^1$-kernel, it follows that
$$
G\in C([0,T];\mathcal H^1(\Omega)).
$$
It follows that
$$
u\in C([0,T];\mathcal H^1),
\quad
u(0)=u_0.
$$

Differentiating the spectral representation gives
$$
\frac{d}{dt}
E_{\alpha,1}(-\lambda t^\alpha)
=
-\lambda t^{\alpha-1}
E_{\alpha,\alpha}(-\lambda t^\alpha)
$$
and
$$
\frac{d}{dt}
\left[
tE_{\alpha,2}(-\lambda t^\alpha)
\right]
=
E_{\alpha,1}(-\lambda t^\alpha).
$$
For the source term,
$$
\frac{d}{dt}
\left[
t^{\alpha-1}
E_{\alpha,\alpha}(-\lambda t^\alpha)
\right]
=
t^{\alpha-2}
E_{\alpha,\alpha-1}(-\lambda t^\alpha).
$$
Since
$$
t^{\alpha-2}\in L^1(0,T),
$$
the same convolution argument, now in
$\mathcal H^{-1}(\Omega)$, gives
$$
\partial_tu
\in
C([0,T];\mathcal H^{-1}(\Omega)),
\quad
\partial_tu(0)=u_1.
$$

We next prove that
$$
\partial_t u\in C([0,T];\mathcal H^{-1}(\Omega)).
$$
For the initial displacement term, we use
$$
\frac{d}{dt}
E_{\alpha,1}(-\lambda_n t^\alpha)
=
-\lambda_n t^{\alpha-1}
E_{\alpha,\alpha}(-\lambda_n t^\alpha).
$$
Therefore,
$$
\frac{d}{dt}
\left[
 u_{0,n}E_{\alpha,1}(-\lambda_n t^\alpha)
\right]
=
-\lambda_n u_{0,n}t^{\alpha-1}
E_{\alpha,\alpha}(-\lambda_n t^\alpha).
$$
Then
$$
\begin{aligned}
\sum_{n=1}^{\infty}
\lambda_n^{-1}
\left|
\lambda_n u_{0,n}
t^{\alpha-1}
E_{\alpha,\alpha}(-\lambda_n t^\alpha)
\right|^2
&\le
C t^{2\alpha-2}
\sum_{n=1}^{\infty}
\lambda_n| u_{0,n}|^2
=
C t^{2\alpha-2}
\| u_0\|_{\mathcal H^1}^2.
\end{aligned}
$$
Hence this derivative term belongs to
$C([0,T];\mathcal H^{-1}(\Omega))$ and vanishes at $t=0$.

For the initial velocity term, the identity
$$
\frac{d}{dt}
\left[
tE_{\alpha,2}(-\lambda_n t^\alpha)
\right]
=
E_{\alpha,1}(-\lambda_n t^\alpha)
$$
gives
$$
\frac{d}{dt}
\left[
 u_{1,n}tE_{\alpha,2}(-\lambda_n t^\alpha)
\right]
=
 u_{1,n}E_{\alpha,1}(-\lambda_n t^\alpha).
$$
Since
$$
\sum_{n=1}^{\infty}
\lambda_n^{-1}
\left|
 u_{1,n}E_{\alpha,1}(-\lambda_n t^\alpha)
\right|^2
\le
C
\sum_{n=1}^{\infty}
\lambda_n^{-1}| u_{1,n}|^2
\le
C\| u_1\|_{L^2(\Omega)}^2,
$$
dominated convergence implies
$$
E_{\alpha,1}(-t^\alpha L_\mu) u_1
\in
C([0,T];\mathcal H^{-1}(\Omega)).
$$
Moreover,
$$
E_{\alpha,1}(0) u_1= u_1
\quad
\text{in }\mathcal H^{-1}(\Omega).
$$

It remains to deal with the derivative of the source term. Using
$$
\frac{d}{dt}
\left[
t^{\alpha-1}E_{\alpha,\alpha}(-\lambda t^\alpha)
\right]
=
t^{\alpha-2}E_{\alpha,\alpha-1}(-\lambda t^\alpha),
$$
define
$$
K_1(t):=
t^{\alpha-2}E_{\alpha,\alpha-1}(-t^\alpha L_\mu),
\quad
t>0.
$$
For $z\in L^2$, we estimate $K_1(t)z$ in $\mathcal H^{-1}$:
$$
\begin{aligned}
\|K_1(t)z\|_{\mathcal H^{-1}}^2
&=
\sum_{n=1}^{\infty}
\lambda_n^{-1}
\left|
t^{\alpha-2}
E_{\alpha,\alpha-1}(-\lambda_n t^\alpha)
(z,\phi_n)_{L^2}
\right|^2
\\
&\le
C t^{2\alpha-4}
\sum_{n=1}^{\infty}
\lambda_n^{-1}
|(z,\phi_n)_{L^2}|^2
\\
&\le
C t^{2\alpha-4}\|z\|_{L^2}^2,
\end{aligned}
$$
where we used $\lambda_1>0$. Hence
$$
\|K_1(t)z\|_{\mathcal H^{-1}}
\le
Ct^{\alpha-2}\|z\|_{L^2}.
$$
Since $\alpha-2\in(-1,0)$, we have
$t^{\alpha-2}\in L^1(0,T)$. Therefore,
$$
K_1\in L^1(0,T;\mathcal L(L^2,\mathcal H^{-1})).
$$
Consequently,
$$
\frac{d}{dt}G(t)
=
\int_0^t K_1(t-s)f(s)\,ds
$$
belongs to $C([0,T];\mathcal H^{-1}(\Omega))$.
Moreover,
$$
\left\|
\frac{d}{dt}G(t)
\right\|_{\mathcal H^{-1}}
\le
C
\int_0^t
(t-s)^{\alpha-2}
\|f(s)\|_{L^2}\,ds
\le
C t^{\alpha-1}
\|f\|_{L^\infty(0,T;L^2)},
$$
so
$$
\frac{d}{dt}G(0)=0
\quad
\text{in }\mathcal H^{-1}(\Omega).
$$
Combining the three derivative estimates yields
$$
\partial_t u
\in
C([0,T];\mathcal H^{-1}(\Omega)),
$$
and
$$
\partial_t u(\cdot,0)= u_1
\quad
\text{in }\mathcal H^{-1}(\Omega).
$$

Moreover, since
$$
\|L_\mu u\|_{L^2(0,T;\mathcal H^{-1}(\Omega))}
=
\|u\|_{L^2(0,T;\mathcal H^1(\Omega))}, 
$$
and
$$\|u\|_{C([0,T];\mathcal H^1)}
\le
C
\left(
\|u_0\|_{\mathcal H^1}
+
\|u_1\|_{L^2}
+
\|p\|_{L^2(\Omega)}
\|q\|_{L^\infty(0,T)}
\right),$$
we obtain
$$
\|L_\mu u\|_{L^2(0,T;\mathcal H^{-1}(\Omega))}
\le
C
\left(
\|u_0\|_{\mathcal H^1(\Omega)}
+
\|u_1\|_{L^2(\Omega)}
+
\|p\|_{L^2(\Omega)}
\|q\|_{L^\infty(0,T)}
\right),
$$
On the other hand, the embedding $L^2(\Omega)\hookrightarrow\mathcal H^{-1}(\Omega)$ implies 
$$
\|f\|_{L^2(0,T;\mathcal H^{-1}(\Omega))}
\le
C
\|f\|_{L^2(0,T;L^2)}
\le
C
\|p\|_{L^2(\Omega)}
\|q\|_{L^\infty(0,T)}.
$$
Therefore, using ${}_0^C D_t^\alpha u = f-L_\mu u$, 
we conclude that
$$
\begin{aligned}
\|{}_0^C D_t^\alpha u\|_{L^2(0,T;\mathcal H^{-1}(\Omega))}
&\le
\|f\|_{L^2(0,T;\mathcal H^{-1}(\Omega))}
+
\|L_\mu u\|_{L^2(0,T;\mathcal H^{-1}(\Omega))}
\\
&\le
C
\left(
\|u_0\|_{\mathcal H^1(\Omega)}
+
\|u_1\|_{L^2(\Omega)}
+
\|p\|_{L^2(\Omega)}
\|q\|_{L^\infty(0,T)}
\right).
\end{aligned}
$$

The preceding estimates together with $\|f\|_{L^\infty(0,T;L^2)}=\|p\|_{L^2(\Omega)}\|q\|_{L^\infty(0,T)}$
yield \eqref{eq:apriori-strong}.

It remains to verify that the function constructed above is indeed a weak solution. For every $n$, the coefficient $u_n$ defined by
\eqref{eq:modal-repr} satisfies
$$
{}_0^C D_t^\alpha u_n+\lambda_nu_n=f_n.
$$
Let $v\in\mathcal H^1(\Omega)$ and set
$$
v^{(N)}
:=
\sum_{n=1}^{N}(v,\phi_n)_{L^2(\Omega)}\phi_n.
$$
Then
$$
v^{(N)}\to v
\quad\text{in }\mathcal H^1(\Omega),
$$
because
$$
\|v-v^{(N)}\|_{\mathcal H^1}^2
=
\sum_{n=N+1}^{\infty}
\lambda_n |(v,\phi_n)_{L^2(\Omega)}|^2
\to0.
$$
For each $N$, the modal identities give
$$
\left\langle
{}_0^CD_t^\alpha u(t),v^{(N)}
\right\rangle_{\mathcal H^{-1},\mathcal H^1}
+
a_\mu( u(t),v^{(N)})
=
(f(t),v^{(N)})_{L^2(\Omega)}.
$$
Since ${}_0^CD_t^\alpha u(t)\in\mathcal H^{-1}(\Omega)$, $u(t)\in\mathcal H^1(\Omega)$, $f(t)\in L^2(\Omega)$, we have
$$
\begin{aligned}
\left|
\left\langle
{}_0^CD_t^\alpha u(t),v^{(N)}-v
\right\rangle_{\mathcal H^{-1},\mathcal H^1}
\right|
&\le
\|{}_0^CD_t^\alpha u(t)\|_{\mathcal H^{-1}}
\|v^{(N)}-v\|_{\mathcal H^1},
\\
|a_\mu( u(t),v^{(N)}-v)|
&\le
C\| u(t)\|_{\mathcal H^1}
\|v^{(N)}-v\|_{\mathcal H^1},
\\
|(f(t),v^{(N)}-v)_{L^2}|
&\le
C\|f(t)\|_{L^2}
\|v^{(N)}-v\|_{\mathcal H^1}.
\end{aligned}
$$
All three right-hand sides tend to zero as $N\to\infty$. Passing to the
limit in the identity for $v^{(N)}$, we obtain
$$
\left\langle
{}_0^CD_t^\alpha u(t),v
\right\rangle_{\mathcal H^{-1},\mathcal H^1}
+
a_\mu( u(t),v)
=
(f(t),v)_{L^2(\Omega)}.
$$
Hence the variational identity holds for a.e. $t\in(0,T)$ and all
$v\in\mathcal H^1(\Omega)$.

Finally, suppose that $u^{(1)}$ and $u^{(2)}$ are two weak solutions
with the same data. Their difference $w=u^{(1)}-u^{(2)}$ satisfies
$$
{}_0^C D_t^\alpha w+L_\mu w=0,
\quad
w(0)=0,
\quad
\partial_tw(0)=0.
$$
Taking the duality pairing with $\phi_n$ gives
$$
{}_0^CD_t^\alpha w_n(t)+\lambda_n w_n(t)=0,
\quad
w_n(0)=0,
\quad
w_n'(0)=0,
$$
where $w_n(t):=(w(t),\phi_n)_{L^2}$. The corresponding scalar fractional initial-value problem has only
the zero solution, and hence
$$
w_n(t)=0
\quad
\text{for every }n.
$$
Completeness of $\{\phi_n\}_{n\ge1}$ in $L^2(\Omega)$ then yields
$w\equiv0$. The solution is therefore unique. 
\end{proof}

Theorem \ref{thm:wellposed} shows in particular that the terminal state
$u(\cdot,T)$ is well defined in $\mathcal H^1(\Omega)$ for every
$p\in L^2(\Omega)$. Hence the forward analysis provides the
well-defined source-to-terminal mapping required in the inverse
problem considered below.

\section{Inverse Source Problem and Uniqueness}

We now turn to the identification of the unknown spatial source factor
$p$ from the terminal observation. Throughout this section, the initial
data $u_0$ and $u_1$, as well as the temporal factor
$q\in L^\infty(0,T)$, are assumed to be known.

For each $p\in L^2(\Omega)$, let $u[p]$ denote the unique weak solution
of \eqref{eq:main}. Since the initial data are generally nonzero, the
mapping
$$
p\longmapsto u[p](\cdot,T)
$$
is affine rather than linear. To isolate the contribution of the
unknown source, let $u[0]$ denote the solution corresponding to
$p=0$ with the same initial data, and define the source-to-terminal
operator
\begin{equation}\label{eq:def-A}
\mathcal A p
:=
u[p](\cdot,T)-u[0](\cdot,T).
\end{equation}
By the linearity of the forward equation with respect to the source
term, $\mathcal A$ is a linear operator from $L^2(\Omega)$ into
$L^2(\Omega)$. Moreover, the well-posedness result established in the
previous section ensures that $\mathcal A$ is well defined.

Accordingly, setting
\begin{equation}\label{eq:def-data}
\omega
:=
\omega_{\mathrm{obs}}-u[0](\cdot,T),
\end{equation}
the inverse source problem can be written as
\begin{equation}\label{eq:inverse-operator}
\mathcal A p=\omega.
\end{equation}
Thus, the inverse problem consists in determining
$p\in L^2(\Omega)$ from the terminal source-induced response
$\omega$.

Let $\{(\lambda_n,\phi_n)\}_{n\ge1}$ be the eigensystem of $L_\mu$
introduced in the previous section, and write
$$
p_n:=(p,\phi_n)_{L^2(\Omega)}.
$$
By the spectral representation of the forward solution, the component
generated by the source $p(x)q(t)$ is given by
$$
u[p](t)-u[0](t)
=
\sum_{n=1}^{\infty}
p_n
\left[
\int_0^t
(t-s)^{\alpha-1}
E_{\alpha,\alpha}
\bigl(-\lambda_n(t-s)^\alpha\bigr)
q(s)\,ds
\right]
\phi_n .
$$
Evaluating this expression at $t=T$, we obtain
\begin{equation}\label{eq:A-spectral}
\mathcal A p
=
\sum_{n=1}^{\infty}
K_n(q)p_n\phi_n,
\end{equation}
where
\begin{equation}\label{eq:Kn}
K_n(q)
:=
\int_0^T
(T-s)^{\alpha-1}
E_{\alpha,\alpha}
\bigl(-\lambda_n(T-s)^\alpha\bigr)
q(s)\,ds .
\end{equation}

For every fixed $n$, the quantity $K_n(q)$ is well defined. Indeed,
using the standard Mittag--Leffler estimate
$$
\left|
E_{\alpha,\alpha}(-r)
\right|
\le
\frac{C}{1+r},
\quad r\ge0,
$$
we have
$$
\begin{aligned}
|K_n(q)|
&\le
C\|q\|_{L^\infty(0,T)}
\int_0^T
\frac{(T-s)^{\alpha-1}}
{1+\lambda_n(T-s)^\alpha}\,ds<
\infty.
\end{aligned}
$$

Formula \eqref{eq:A-spectral} shows that the terminal observation
determines the $n$th Fourier coefficient of the source through the
multiplier $K_n(q)$. This motivates the following modal
nondegeneracy condition:
\begin{equation}\label{eq:nondegeneracy}
K_n(q)\neq0,
\quad
\forall n\ge1.
\end{equation}

\begin{theorem}[Uniqueness from inverse problem]\label{thm:uniq}
Let $1<\alpha<2$ and $\mu<\mu^\ast=1/4$. Assume that
$$
u_0\in\mathcal H^1(\Omega),
\quad
u_1\in L^2(\Omega),
\quad
q\in L^\infty(0,T)
$$
are known, and suppose that the modal nondegeneracy condition
\eqref{eq:nondegeneracy} holds. Then the inverse problem
\eqref{eq:inverse-operator} has at most one solution
$p\in L^2(\Omega)$.
\end{theorem}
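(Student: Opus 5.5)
The argument reduces uniqueness to injectivity of the linear operator $\mathcal A$ defined in \eqref{eq:def-A}. Suppose $p^{(1)},p^{(2)}\in L^2(\Omega)$ both solve \eqref{eq:inverse-operator} with the same data $\omega$. Both $u[p^{(1)}]$ and $u[p^{(2)}]$ are the unique weak solutions given by Theorem~\ref{thm:wellposed} with the same $u_0,u_1$. The term $u[0](\cdot,T)$ is therefore common to both, and subtracting yields $\mathcal A(p^{(1)}-p^{(2)})=0$, using the linearity of $\mathcal A$. Setting $p:=p^{(1)}-p^{(2)}$, it suffices to show that $\mathcal A p=0$ forces $p=0$.

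\emph{Justification of the spectral formula.} Next we justify \eqref{eq:A-spectral} as an identity in $L^2(\Omega)$, so that Fourier coefficients may be taken termwise. The difference $w:=u[p]-u[0]$ solves \eqref{eq:main} with zero initial data and source $p(x)q(t)$. By the uniqueness part of Theorem~\ref{thm:wellposed}, $w$ coincides with the convolution term $G$ in \eqref{eq:solution-repr}, which belongs to $C([0,T];\mathcal H^1(\Omega))\subset C([0,T];L^2(\Omega))$. We then take the $L^2$ inner product of $w(T)$ with $\phi_n$. Since $f(s)=p\,q(s)$, the Bochner integral commutes with the bounded functional $(\cdot,\phi_n)_{L^2}$, and the spectral definition of $E_{\alpha,\alpha}(-(T-s)^\alpha L_\mu)$ gives $(w(T),\phi_n)_{L^2}=K_n(q)\,p_n$. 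The scalar integral defining $K_n(q)$ converges absolutely by the Mittag--Leffler bound already noted, since $(T-s)^{\alpha-1}$ is integrable.

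\emph{Conclusion.} From $\mathcal A p=0$ we obtain $K_n(q)p_n=0$ for every $n\ge1$. The nondegeneracy condition \eqref{eq:nondegeneracy} then yields $p_n=0$ for all $n$. Since $\{\phi_n\}_{n\ge1}$ is an orthonormal basis of $L^2(\Omega)$, Parseval's identity gives $\|p\|_{L^2}^2=\sum_n|p_n|^2=0$, so $p=0$ and hence $p^{(1)}=p^{(2)}$.

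The only step requiring care is the second one: identifying $\mathcal A p$ with the series, that is, interchanging the time integral with the modal projection. I expect this to be routine because the kernel estimate $\|K(t)z\|_{\mathcal H^1}\le Ct^{\alpha/2-1}\|z\|_{L^2}$ from the proof of Theorem~\ref{thm:wellposed} gives Bochner integrability. No positivity of $q$ or of the Mittag--Leffler function is needed, since the whole difficulty of the diffusion-wave regime is absorbed into hypothesis \eqref{eq:nondegeneracy}.
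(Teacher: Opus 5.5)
Your proposal is correct and follows the paper's proof: reduce to $\mathcal A(p_1-p_2)=0$, use the spectral representation \eqref{eq:A-spectral} to get $K_n(q)p_n=0$, then conclude with the nondegeneracy condition \eqref{eq:nondegeneracy} and the completeness of $\{\phi_n\}_{n\ge1}$. Your extra step, which commutes the Bochner convolution integral with the modal projection $(\cdot,\phi_n)_{L^2}$ to justify \eqref{eq:A-spectral}, is a detail the paper takes for granted.
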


\begin{proof}
Suppose that $p_1,p_2\in L^2(\Omega)$ produce the same terminal
observation, i.e. $u[p_1](\cdot,T)=u[p_2](\cdot,T)$. Hence, by the definition of $\mathcal A$,
$$
\mathcal A(p_1-p_2)=0.
$$
Set $p:=p_1-p_2$ and denote $p_n:=(p,\phi_n)_{L^2(\Omega)}$. By the spectral representation \eqref{eq:A-spectral},
$$
\mathcal A p
=
\sum_{n=1}^{\infty}
K_n(q)p_n\phi_n.
$$
Since $\mathcal A p=0$ in $L^2(\Omega)$ and
$\{\phi_n\}_{n\ge1}$ is an orthonormal basis of $L^2(\Omega)$, we
obtain
$$
K_n(q)p_n=0,
\quad
\forall n\ge1.
$$
The assumption $K_n(q)\neq0$ for every $n$ therefore implies
$$
p_n=0,
\quad
\forall n\ge1.
$$
By the completeness of $\{\phi_n\}_{n\ge1}$ in $L^2(\Omega)$, it
follows that $p=0$ in $L^2(\Omega)$. Consequently,
$$
p_1=p_2,
$$
which proves the uniqueness.
\end{proof}

\begin{remark}
Condition \eqref{eq:nondegeneracy} has a direct spectral
interpretation. The quantity $K_n(q)$ represents the response of the
$n$th spatial eigenmode to the prescribed temporal excitation $q$ at
the observation time $T$. If
$$
K_{n_0}(q)=0
$$
for some $n_0$, then
$$
\mathcal A\phi_{n_0}=0,
$$
and therefore the component of the source along $\phi_{n_0}$ cannot
be detected from the terminal measurement. Hence uniqueness fails in
that mode. Therefore, the condition $K_n(q)\neq0$ should be
verified for the prescribed temporal excitation rather than inferred
solely from a sign assumption on $q$.
\end{remark}

\begin{corollary}[Uniqueness for $q\equiv 1$]
\label{cor:constant-q}
Let $1<\alpha<2$ and $\mu<\mu^\ast=1/4$, and assume that $q(t)\equiv 1$ for $t\in(0,T)$. For each $n\ge1$, define
$$
K_n(1)
=
\int_0^T
(T-s)^{\alpha-1}
E_{\alpha,\alpha}
\bigl(-\lambda_n(T-s)^\alpha\bigr)\,ds.
$$
Then
$$
K_n(1)
=
\frac{1-E_{\alpha,1}(-\lambda_nT^\alpha)}{\lambda_n}.
$$

Let $\mathcal Z_\alpha := \left\{\rho>0: E_{\alpha,1}(-\rho)=1 \right\}$. Since the function
$$
\rho\mapsto E_{\alpha,1}(-\rho)-1
$$
is real analytic and not identically zero, the set
$\mathcal Z_\alpha$ is at most countable. Define the exceptional set
of observation times by
$$
\mathcal T_{\mathrm{bad}}
:=
\bigcup_{n=1}^{\infty}
\left\{
\left(\frac{\rho}{\lambda_n}\right)^{1/\alpha}
:
\rho\in\mathcal Z_\alpha
\right\}.
$$
Then $\mathcal T_{\mathrm{bad}}$ is at most countable, and for every $T\in(0,\infty)\setminus\mathcal T_{\mathrm{bad}}$, one has
$$
K_n(1)\neq0,
\quad
\forall n\ge1.
$$
Consequently, for every such observation time $T$, the inverse source
problem admits at most one solution $p\in L^2(\Omega)$.
\end{corollary}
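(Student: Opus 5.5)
The plan has three parts: compute $K_n(1)$ in closed form, show the exceptional set $\mathcal T_{\mathrm{bad}}$ is countable, and then invoke Theorem~\ref{thm:uniq}.

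\textbf{Closed form for $K_n(1)$.} Substitute $\tau=T-s$ to get
$$K_n(1)=\int_0^T\tau^{\alpha-1}E_{\alpha,\alpha}(-\lambda_n\tau^\alpha)\,d\tau.$$
Then use the derivative identity already recorded in the proof of Theorem~\ref{thm:wellposed},
$$\frac{d}{d\tau}E_{\alpha,1}(-\lambda\tau^\alpha)=-\lambda\tau^{\alpha-1}E_{\alpha,\alpha}(-\lambda\tau^\alpha).$$
The integrand is integrable near $\tau=0$ because $\alpha-1>0$, so the fundamental theorem of calculus applies on $[0,T]$. With $E_{\alpha,1}(0)=1$ this gives
$$K_n(1)=\frac{1-E_{\alpha,1}(-\lambda_nT^\alpha)}{\lambda_n},$$
where $\lambda_n>0$ is guaranteed by the coercivity of $a_\mu$. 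As a sanity check, I would also verify the identity termwise from the power series $E_{\alpha,1}(z)=\sum_k z^k/\Gamma(\alpha k+1)$, differentiating and using $\Gamma(\alpha k+1)=\alpha k\,\Gamma(\alpha k)$.

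\textbf{Countability of $\mathcal Z_\alpha$ and $\mathcal T_{\mathrm{bad}}$.} The function $g(\rho)=E_{\alpha,1}(-\rho)-1$ is the restriction of an entire function of $\rho$, so it is real analytic on $\mathbb R$. It is not identically zero: $g'(0)=-1/\Gamma(\alpha+1)\neq0$. Alternatively, $E_{\alpha,1}(-\rho)\to0$ as $\rho\to\infty$ for $1<\alpha<2$, by the bound $|E_{\alpha,1}(-r)|\le C/(1+r)$ quoted earlier, so $g\to-1$.

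Zeros of a nontrivial real analytic function on $\mathbb R$ are isolated, so $\mathcal Z_\alpha$ is discrete in $(0,\infty)$. In fact it accumulates only at infinity, but countability is all that is needed. For each $n$, the map $\rho\mapsto(\rho/\lambda_n)^{1/\alpha}$ sends a countable set to a countable set. A countable union over $n$ of countable sets is countable, so $\mathcal T_{\mathrm{bad}}$ is at most countable.

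\textbf{Nonvanishing and uniqueness.} Fix $T\notin\mathcal T_{\mathrm{bad}}$ and suppose $K_n(1)=0$ for some $n$. Then $E_{\alpha,1}(-\lambda_nT^\alpha)=1$ with $\rho:=\lambda_nT^\alpha>0$, so $\rho\in\mathcal Z_\alpha$. This gives $T=(\rho/\lambda_n)^{1/\alpha}\in\mathcal T_{\mathrm{bad}}$, a contradiction. Hence condition \eqref{eq:nondegeneracy} holds, and since $q\equiv1\in L^\infty(0,T)$, Theorem~\ref{thm:uniq} yields uniqueness.

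The only step needing real care is the justification that $g\not\equiv0$. The derivative computation at $0$ settles it immediately, so I expect no genuine obstacle.
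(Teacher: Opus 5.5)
Your proposal is correct and follows the paper's proof: the derivative identity $\frac{d}{d\tau}E_{\alpha,1}(-\lambda\tau^\alpha)=-\lambda\tau^{\alpha-1}E_{\alpha,\alpha}(-\lambda\tau^\alpha)$ gives the closed form for $K_n(1)$. The zeros of the real-analytic function $E_{\alpha,1}(-\rho)-1$ are isolated, and together with countability of the spectrum this makes $\mathcal T_{\mathrm{bad}}$ countable, after which Theorem~\ref{thm:uniq} finishes the argument.

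Your check $g'(0)=-1/\Gamma(\alpha+1)\neq0$ supplies the non-vanishing that the paper only asserts. Your decay observation $g(\rho)\to-1$ in fact shows that $\mathcal Z_\alpha$ is bounded and hence finite; it does not accumulate at infinity.
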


\begin{proof}
For $q\equiv1$, the identity
$$
\frac{d}{dt}
E_{\alpha,1}(-\lambda_n t^\alpha)
=
-\lambda_n t^{\alpha-1}
E_{\alpha,\alpha}(-\lambda_n t^\alpha)
$$
gives
$$
K_n(1)
=
\frac{1-E_{\alpha,1}(-\lambda_nT^\alpha)}{\lambda_n}.
$$
Hence $K_n(1)=0$ if and only if $\lambda_nT^\alpha\in\mathcal Z_\alpha$. 
Equivalently,
$$
T
=
\left(\frac{\rho}{\lambda_n}\right)^{1/\alpha}
$$
for some $\rho\in\mathcal Z_\alpha$.

Because $E_{\alpha,1}$ is an entire function,
$E_{\alpha,1}(-\rho)-1$ is real analytic on $(0,\infty)$. Moreover,
it is not identically zero. Therefore, its positive zeros are isolated,
and $\mathcal Z_\alpha$ is at most countable. Since the spectrum
$\{\lambda_n\}_{n\ge1}$ is also countable, the set
$\mathcal T_{\mathrm{bad}}$ is at most countable.

Thus, for every $T\notin\mathcal T_{\mathrm{bad}}$, we have
$$
K_n(1)\neq0
\quad
\forall n\ge1.
$$
The conclusion then follows directly from
Theorem~\ref{thm:uniq}.
\end{proof}

\begin{remark}
Suppose that $q$ is real analytic on $(0,T_{\max})$, extends
continuously to $t=0$, and satisfies $q(0)\neq0$. For each fixed $n$, define
$$
K_n(q;T)
:=
\int_0^T
(T-s)^{\alpha-1}
E_{\alpha,\alpha}
\bigl(-\lambda_n(T-s)^\alpha\bigr)
q(s)\,ds,
\quad
0<T<T_{\max}.
$$
Then
$$
K_n(q;T)
=
\frac{q(0)}{\Gamma(\alpha+1)}T^\alpha
+
o(T^\alpha)
\quad
\text{as }T\to0^+,
$$
and hence $K_n(q;\cdot)$ is not identically zero. If
$T\mapsto K_n(q;T)$ is real analytic on $(0,T_{\max})$, its zeros are
isolated. Consequently, the exceptional set
$$
\bigcup_{n=1}^{\infty}
\left\{
T\in(0,T_{\max}):K_n(q;T)=0
\right\}
$$
is at most countable. Thus, outside this exceptional set, the modal
nondegeneracy condition holds for every $n$.
\end{remark}

\section{Tikhonov regularization and conjugate gradient reconstruction}

In this section, we reformulate the inverse source problem as a
Tikhonov-regularized optimization problem and derive a continuous adjoint-based
conjugate gradient method. The adjoint equation is formulated in terms of the
right-sided Riemann--Liouville fractional derivative, which is the natural
counterpart of the left-sided Caputo derivative appearing in the forward
problem.

\subsection{Operator formulation}

Recall the linear source-to-terminal operator introduced in the previous
section:
$$
\mathcal A:L^2(\Omega)\to L^2(\Omega),
\quad
\mathcal A p
:=
u[p](\cdot,T)-u[0](\cdot,T),
$$
where $u[p]$ denotes the weak solution of the forward problem
\eqref{eq:main} corresponding to the source factor $p$, and $u[0]$
denotes the solution with zero source and the same initial data. Introducing the source-induced terminal datum
$$
\omega
:=
\omega_{\mathrm{obs}}-u[0](\cdot,T),
$$
the inverse source problem takes the linear operator form
\begin{equation}\label{eq:inverse-operator-tikh}
\mathcal A p=\omega.
\end{equation}

By the spectral representation established in the previous section,
$\mathcal A$ can be written as
\begin{equation}\label{eq:A-operator-repr}
\mathcal A p
=
\int_0^T
(T-s)^{\alpha-1}
E_{\alpha,\alpha}
\bigl(-(T-s)^\alpha L_\mu\bigr)
p\,q(s)\,ds .
\end{equation}
Equivalently,
$$
\mathcal A p
=
\sum_{n=1}^{\infty}
K_n(q)p_n\phi_n,
\quad
p_n=(p,\phi_n)_{L^2(\Omega)},
$$
where $K_n(q)$ is defined by \eqref{eq:Kn}.

We next establish the compactness of $\mathcal A$. From the
Mittag--Leffler estimate obtained in the forward analysis,
$$
\left\|
\tau^{\alpha-1}
E_{\alpha,\alpha}(-\tau^\alpha L_\mu)z
\right\|_{\mathcal H^1(\Omega)}
\le
C\tau^{\alpha/2-1}
\|z\|_{L^2(\Omega)},
\quad
0<\tau\le T.
$$
Hence, for $p\in L^2(\Omega)$,
$$
\begin{aligned}
\|\mathcal A p\|_{\mathcal H^1(\Omega)}
&\le
C
\int_0^T
(T-s)^{\alpha/2-1}
|q(s)|
\|p\|_{L^2(\Omega)}
\,ds
\\
&\le
C
\|q\|_{L^\infty(0,T)}
\|p\|_{L^2(\Omega)}
\int_0^T
(T-s)^{\alpha/2-1}\,ds
\\
&\le
C_T
\|q\|_{L^\infty(0,T)}
\|p\|_{L^2(\Omega)}.
\end{aligned}
$$
Therefore, $\mathcal A: L^2(\Omega) \longrightarrow \mathcal H^1(\Omega)$ is bounded. Since the embedding $\mathcal H^1(\Omega) \hookrightarrow L^2(\Omega)$ is compact, it follows that
$$
\mathcal A:
L^2(\Omega)
\longrightarrow
L^2(\Omega)
$$
is compact.

Consequently, although uniqueness holds under the modal
nondegeneracy condition established in the previous section, the
recovery of $p$ from perturbed terminal data is unstable. This
motivates the introduction of a regularization method.

\subsection{Tikhonov functional}

Let $\omega_{\mathrm{obs}}^\delta\in L^2(\Omega)$ be noisy terminal data satisfying
$$
\|\omega_{\mathrm{obs}}^\delta-\omega_{\mathrm{obs}}\|_{L^2(\Omega)}
\le \delta.
$$
As in the previous subsection, we subtract the known zero-source
contribution and define
$$
\omega^\delta
:=
\omega_{\mathrm{obs}}^\delta-u[0](\cdot,T).
$$
Since $\omega=\omega_{\mathrm{obs}}-u[0](\cdot,T)$, the noise level is preserved:
$$
\|\omega^\delta-\omega\|_{L^2(\Omega)}
=
\|\omega_{\mathrm{obs}}^\delta-\omega_{\mathrm{obs}}\|_{L^2(\Omega)}
\le \delta.
$$

For $\lambda>0$, we define the Tikhonov functional
\begin{equation}\label{eq:tikhonov-functional}
\mathcal J_\lambda(p)
:=
\frac{1}{2}
\left\|\mathcal A p-\omega^\delta\right\|_{L^2(\Omega)}^2
+
\frac{\lambda}{2}
\left\|p\right\|_{L^2(\Omega)}^2,
\quad
p\in L^2(\Omega).
\end{equation}
Equivalently, in terms of the forward solution,
$$
\mathcal J_\lambda(p)
=
\frac{1}{2}
\left\|u[p](\cdot,T)-\omega_{\mathrm{obs}}^\delta\right\|_{L^2(\Omega)}^2
+
\frac{\lambda}{2}
\left\|p\right\|_{L^2(\Omega)}^2.
$$

The regularized inverse problem is to determine $p_\lambda^\delta\in L^2(\Omega)$ such that
$$
\mathcal J_\lambda(p_\lambda^\delta)
=
\min_{p\in L^2(\Omega)}
\mathcal J_\lambda(p).
$$
Since $\mathcal A:L^2(\Omega)\to L^2(\Omega)$ is bounded and linear, the mapping
$$
p\mapsto
\frac12
\left\|\mathcal A p-\omega^\delta\right\|_{L^2(\Omega)}^2
$$
is continuous and convex. Moreover, because $\lambda>0$,
$$
\mathcal J_\lambda(p)
\ge
\frac{\lambda}{2}\left\|p\right\|_{L^2(\Omega)}^2,
$$
and hence $\mathcal J_\lambda$ is coercive on $L^2(\Omega)$.
The quadratic penalty term also makes $\mathcal J_\lambda$ strictly
convex. Therefore, $\mathcal J_\lambda$ admits a unique minimizer
$p_\lambda^\delta\in L^2(\Omega)$.

For later use, we define the residual
\begin{equation}\label{eq:residual}
r[p]
:=
\mathcal A p-\omega^\delta.
\end{equation}
Equivalently,
$$
r[p]
=
u[p](\cdot,T)-\omega_{\mathrm{obs}}^\delta.
$$

\subsection{Sensitivity equation and directional derivative}

Let $p\in L^2(\Omega)$ be fixed and let
$h\in L^2(\Omega)$ be an arbitrary direction. Since the forward
problem is linear with respect to the source term, for any
$\varepsilon\in\mathbb R$ we have
$$
u[p+\varepsilon h]
=
u[p]+\varepsilon\eta[h],
$$
where $\eta[h]$ is the unique weak solution of
\begin{equation}\label{eq:sensitivity-h}
\begin{cases}
{}_0^C D_t^\alpha\eta+L_\mu\eta=h(x)q(t),
& (x,t)\in\Omega\times(0,T),\\[1mm]
\eta(\cdot,0)=0,\quad
\partial_t\eta(\cdot,0)=0,
& x\in\Omega,\\[1mm]
\eta(0,t)=\eta(1,t)=0,
& t\in(0,T).
\end{cases}
\end{equation}
Thus, $\eta[h]$ represents the variation of the state generated by
the perturbation $h$.

Since the source-to-terminal operator $\mathcal A$ is linear, its
Fr\'echet derivative is independent of $p$ and satisfies
$$
\mathcal A'(p)h
=
\mathcal A h
=
\eta[h](\cdot,T).
$$
Recalling the residual
$$
r[p]=\mathcal A p-\omega^\delta,
$$
the directional derivative of the Tikhonov functional
\eqref{eq:tikhonov-functional} in the direction $h$ is
\begin{equation}\label{eq:J-directional}
\begin{aligned}
\mathcal J_\lambda'(p)h
&=
\bigl(
\mathcal A p-\omega^\delta,
\mathcal A h
\bigr)_{L^2(\Omega)}
+
\lambda(p,h)_{L^2(\Omega)}
\\
&=
\bigl(
r[p],
\eta[h](\cdot,T)
\bigr)_{L^2(\Omega)}
+
\lambda(p,h)_{L^2(\Omega)}.
\end{aligned}
\end{equation}

To obtain an explicit $L^2(\Omega)$-gradient, it remains to rewrite
the terminal term
$$
\bigl(
r[p],
\eta[h](\cdot,T)
\bigr)_{L^2(\Omega)}
$$
as an integral involving $h$. This will be achieved through an
adjoint problem involving right-sided fractional operators.

\subsection{Right-sided fractional operators}

To formulate the adjoint problem, we introduce the right-sided
Riemann--Liouville fractional operators. For $\beta>0$, the
right-sided fractional integral is defined by
\begin{equation}\label{eq:right-RL-integral}
(I_{T-}^{\beta}\psi)(t)
:=
\frac{1}{\Gamma(\beta)}
\int_t^T
(s-t)^{\beta-1}\psi(s)\,ds.
\end{equation}
For $1<\alpha<2$, the right-sided Riemann--Liouville derivative of
order $\alpha$ is defined by
\begin{equation}\label{eq:right-RL-alpha}
{}_tD_T^\alpha\psi(t)
:=
\frac{d^2}{dt^2}
\bigl(I_{T-}^{2-\alpha}\psi\bigr)(t),
\end{equation}
whereas the right-sided derivative of order $\alpha-1\in(0,1)$ is
given by
\begin{equation}\label{eq:right-RL-alpha-minus-one}
{}_tD_T^{\alpha-1}\psi(t)
:=
-\frac{d}{dt}
\bigl(I_{T-}^{2-\alpha}\psi\bigr)(t).
\end{equation}

These operators arise naturally from the fractional integration-by-parts
formula associated with the left-sided Caputo derivative and will be
used below to derive the adjoint equation and the gradient of
$\mathcal J_\lambda$.

\subsection{Derivation of the adjoint problem}

Let $\eta=\eta[h]$ be the solution of the sensitivity problem
\eqref{eq:sensitivity-h}. Formally testing the sensitivity equation by
an adjoint variable $\psi$ and integrating over $(0,T)$ yield
$$
\int_0^T
\left\langle
{}_0^C D_t^\alpha\eta(t),\psi(t)
\right\rangle\,dt
+
\int_0^T
a_\mu(\eta(t),\psi(t))\,dt
=
\int_0^T
(hq(t),\psi(t))_{L^2(\Omega)}\,dt .
$$
Since $L_\mu$ is the positive self-adjoint operator associated with the
symmetric form $a_\mu$, we may write, in the corresponding weak sense,
$$
a_\mu(\eta,\psi)
=
\langle L_\mu\eta,\psi\rangle
=
\langle \eta,L_\mu\psi\rangle .
$$

We next use the fractional integration-by-parts formula. For
$1<\alpha<2$, the left-sided Caputo derivative satisfies
\begin{equation}\label{eq:fractional-ibp}
\begin{aligned}
\int_0^T
\left\langle
{}_0^C D_t^\alpha\eta(t),\psi(t)
\right\rangle\,dt
&=
\int_0^T
\left\langle
\eta(t),{}_tD_T^\alpha\psi(t)
\right\rangle\,dt
\\
&\quad+
\left[
\left\langle
\partial_t\eta(t),
I_{T-}^{2-\alpha}\psi(t)
\right\rangle
\right]_{t=0}^{t=T}
\\
&\quad+
\left[
\left\langle
\eta(t),
{}_tD_T^{\alpha-1}\psi(t)
\right\rangle
\right]_{t=0}^{t=T}.
\end{aligned}
\end{equation}
Since $\eta(\cdot,0)=0$, $\partial_t\eta(\cdot,0)=0$, the contributions at $t=0$ vanish, and hence
$$
\begin{aligned}
\int_0^T
\left\langle
{}_0^C D_t^\alpha\eta(t),\psi(t)
\right\rangle\,dt
&=
\int_0^T
\left\langle
\eta(t),{}_tD_T^\alpha\psi(t)
\right\rangle\,dt
\\
&\quad+
\left\langle
\partial_t\eta(\cdot,T),
\left(I_{T-}^{2-\alpha}\psi\right)(T^-)
\right\rangle
\\
&\quad+
\left\langle
\eta(\cdot,T),
\left({}_tD_T^{\alpha-1}\psi\right)(T^-)
\right\rangle ,
\end{aligned}
$$
where the terminal quantities are understood as fractional traces from
the left. Substituting this identity into the weak formulation of the sensitivity
equation gives
$$
\begin{aligned}
\int_0^T
(hq(t),\psi(t))_{L^2(\Omega)}\,dt
&=
\int_0^T
\left\langle
\eta(t),
{}_tD_T^\alpha\psi(t)+L_\mu\psi(t)
\right\rangle\,dt
\\
&\quad+
\left\langle
\partial_t\eta(\cdot,T),
\left(I_{T-}^{2-\alpha}\psi\right)(T^-)
\right\rangle
\\
&\quad+
\left\langle
\eta(\cdot,T),
\left({}_tD_T^{\alpha-1}\psi\right)(T^-)
\right\rangle .
\end{aligned}
$$

To recover the terminal residual term $\bigl(r[p],\eta[h](\cdot,T)\bigr)_{L^2(\Omega)}$, we choose $\psi$ as the solution of the backward fractional problem
$$
{}_tD_T^\alpha\psi+L_\mu\psi=0
$$
subject to the fractional terminal conditions
$$
\left(I_{T-}^{2-\alpha}\psi\right)(T^-)=0,
\quad
\left({}_tD_T^{\alpha-1}\psi\right)(T^-)
=
r[p].
$$
With this choice, the space--time term involving $\eta$ vanishes, as
does the term involving $\partial_t\eta(\cdot,T)$. Consequently,
\begin{equation}\label{eq:duality-identity}
\bigl(r[p],\eta[h](\cdot,T)\bigr)_{L^2(\Omega)}
=
\int_0^T
(hq(t),\psi(t))_{L^2(\Omega)}\,dt .
\end{equation}

The adjoint problem is therefore given by
\begin{equation}\label{eq:adjoint-problem}
\begin{cases}
{}_tD_T^\alpha\psi(x,t)
-\Delta\psi(x,t)
-\dfrac{\mu}{x^2}\psi(x,t)=0,
& (x,t)\in\Omega\times(0,T),\\[3pt]
\psi(0,t)=\psi(1,t)=0,
& t\in(0,T),\\[3pt]
\displaystyle
\lim_{t\uparrow T}
I_{T-}^{2-\alpha}\psi(\cdot,t)=0,
& x\in\Omega,\\[3pt]
\displaystyle
\lim_{t\uparrow T}
{}_tD_T^{\alpha-1}\psi(\cdot,t)
=
r[p],
& x\in\Omega,
\end{cases}
\end{equation}
where
$$
r[p]
=
\mathcal A p-\omega^\delta
=
u[p](\cdot,T)-\omega_{\mathrm{obs}}^\delta.
$$

\begin{remark}
The terminal conditions in \eqref{eq:adjoint-problem} are fractional
terminal conditions associated with the right-sided
Riemann--Liouville derivative. In particular, they should not be
replaced, without further justification, by the classical terminal
conditions
$$
\psi(\cdot,T)=r[p],
\quad
\partial_t\psi(\cdot,T)=0.
$$
The terminal residual enters the adjoint problem through the fractional
boundary terms generated by the integration-by-parts formula
\eqref{eq:fractional-ibp}.
\end{remark}

\subsection{Gradient formula and conjugate gradient reconstruction}

Using the duality identity \eqref{eq:duality-identity}, the directional
derivative \eqref{eq:J-directional} can be rewritten as
$$
\begin{aligned}
\mathcal J_\lambda'(p)h
&=
\bigl(r[p],\eta[h](\cdot,T)\bigr)_{L^2(\Omega)}
+
\lambda(p,h)_{L^2(\Omega)}
\\
&=
\int_0^T
(hq(t),\psi[p](t))_{L^2(\Omega)}\,dt
+
\lambda(p,h)_{L^2(\Omega)}
\\
&=
\left(
\int_0^T q(t)\psi[p](\cdot,t)\,dt
+
\lambda p,
h
\right)_{L^2(\Omega)} ,
\end{aligned}
$$
where $\psi[p]$ denotes the solution of the adjoint problem
\eqref{eq:adjoint-problem} corresponding to the residual
$$
r[p]
=
\mathcal A p-\omega^\delta
=
u[p](\cdot,T)-\omega_{\mathrm{obs}}^\delta .
$$
Therefore, the $L^2(\Omega)$-gradient of the Tikhonov functional is
given by
\begin{equation}\label{eq:gradient}
\nabla\mathcal J_\lambda(p)
=
\int_0^T q(t)\psi[p](\cdot,t)\,dt
+
\lambda p .
\end{equation}

\begin{theorem}[Adjoint-based gradient]\label{thm:adjoint-gradient}
Let $p\in L^2(\Omega)$, let $u[p]$ be the corresponding forward
solution, and let $\psi[p]$ solve the adjoint problem
\eqref{eq:adjoint-problem}. Then the Tikhonov functional
$\mathcal J_\lambda$ defined by \eqref{eq:tikhonov-functional} is
Fr\'echet differentiable on $L^2(\Omega)$, and its gradient is
$$
\nabla\mathcal J_\lambda(p)
=
\int_0^T q(t)\psi[p](\cdot,t)\,dt
+
\lambda p .
$$
\end{theorem}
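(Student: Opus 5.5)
Since $\mathcal A$ is a bounded linear operator on $L^2(\Omega)$ (its compactness was shown above), Fréchet differentiability of $\mathcal J_\lambda$ is elementary. Expanding the quadratic form gives
$$
\mathcal J_\lambda(p+h)-\mathcal J_\lambda(p)
=
(\mathcal A p-\omega^\delta,\mathcal A h)_{L^2(\Omega)}+\lambda(p,h)_{L^2(\Omega)}
+\tfrac12\|\mathcal A h\|_{L^2(\Omega)}^2+\tfrac{\lambda}{2}\|h\|_{L^2(\Omega)}^2 ,
$$
and the last two terms are $O(\|h\|_{L^2}^2)$. Hence $\mathcal J_\lambda'(p)h=(\mathcal A^\ast r[p]+\lambda p,h)_{L^2(\Omega)}$, i.e.\ $\nabla\mathcal J_\lambda(p)=\mathcal A^\ast r[p]+\lambda p$. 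The whole content of the theorem is therefore the identification
$$
\mathcal A^\ast r=\int_0^T q(t)\psi(\cdot,t)\,dt ,
$$
where $\psi$ solves \eqref{eq:adjoint-problem} with terminal datum $r=r[p]$.

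I would not attempt to justify the formal fractional integration by parts \eqref{eq:fractional-ibp} rigorously. Instead I would work modally, using the spectral representation.

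First, I compute $\mathcal A^\ast$ from \eqref{eq:A-spectral}. Since $K_n(q)$ is real, $\mathcal A$ is self-adjoint, and
$$
\mathcal A^\ast r=\sum_n K_n(q)r_n\phi_n, \qquad r_n=(r,\phi_n)_{L^2(\Omega)} .
$$

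Second, I solve the adjoint problem explicitly. I seek $\psi(t)=\sum_n\psi_n(t)\phi_n$ with
$$
\psi_n(t)=r_n\,(T-t)^{\alpha-1}E_{\alpha,\alpha}\bigl(-\lambda_n(T-t)^\alpha\bigr).
$$
Setting $\tau=T-t$ turns the right-sided operators into left-sided Riemann--Liouville ones. The standard Laplace-transform identity shows that $g(\tau)=\tau^{\alpha-1}E_{\alpha,\alpha}(-\lambda\tau^\alpha)$ satisfies ${}^{RL}D^\alpha g+\lambda g=0$. The same identity gives $I^{2-\alpha}g(\tau)=\tau E_{\alpha,2}(-\lambda\tau^\alpha)\to0$ as $\tau\to0$, and $\frac{d}{d\tau}I^{2-\alpha}g=E_{\alpha,1}(-\lambda\tau^\alpha)\to1$. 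These are exactly the two terminal conditions in \eqref{eq:adjoint-problem}, with datum $r_n$; the sign in \eqref{eq:right-RL-alpha-minus-one} cancels the $-d/d\tau$ coming from the change of variables. Convergence of the series is handled by the bound $\|K(\tau)z\|_{\mathcal H^1}\le C\tau^{\alpha/2-1}\|z\|_{L^2}$ from the proof of Theorem~\ref{thm:wellposed}. Thus $\psi\in L^1(0,T;\mathcal H^1(\Omega))$, and $\int_0^T q\psi\,dt$ is well defined in $L^2(\Omega)$. Uniqueness of this $\psi$ also follows modally: the scalar problem with zero terminal data has only the trivial solution in the class $\tau^{\alpha-1}\times$(continuous), which I would take as the solution class for \eqref{eq:adjoint-problem}.

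Third, I integrate term by term, justified by Fubini and the $L^1$ bound:
$$
\int_0^T q(t)\psi_n(t)\,dt=r_n\int_0^T (T-t)^{\alpha-1}E_{\alpha,\alpha}\bigl(-\lambda_n(T-t)^\alpha\bigr)q(t)\,dt=K_n(q)r_n ,
$$
by \eqref{eq:Kn}. Summing over $n$ gives $\int_0^T q\psi\,dt=\mathcal A^\ast r$, which proves the gradient formula. It is consistent with the formal duality identity \eqref{eq:duality-identity}.

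The main obstacle is the second step: giving the adjoint problem a precise meaning, so that "let $\psi[p]$ solve \eqref{eq:adjoint-problem}" refers to a well-defined object equal to the series above. This requires verifying the Riemann--Liouville identities for $g$, including the terminal traces, with care over signs under $t\mapsto T-t$, and fixing a solution class in which uniqueness holds. I would state the solution concept modally and cite the standard Laplace-transform formula $\mathcal L[\tau^{\beta-1}E_{\alpha,\beta}(-\lambda\tau^\alpha)](s)=s^{\alpha-\beta}/(s^\alpha+\lambda)$.
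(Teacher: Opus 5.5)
Your proof is correct, but it takes a different route from the paper. The paper never computes $\mathcal A^\ast$. Starting from the directional derivative \eqref{eq:J-directional}, it tests the sensitivity equation with $\psi$ and applies the fractional integration-by-parts formula \eqref{eq:fractional-ibp}. It then chooses the fractional terminal conditions in \eqref{eq:adjoint-problem} so that the space--time term and the $\partial_t\eta(\cdot,T)$ term vanish, which yields the duality identity \eqref{eq:duality-identity}. That argument is formal: the fractional traces at $t=T$ are not justified, and neither is the regularity needed to integrate by parts against $\partial_t\eta$, which lies only in $\mathcal H^{-1}(\Omega)$. It does have two virtues. It applies to any sufficiently regular solution $\psi$ without a uniqueness theorem for the backward problem, and it still works when there is no eigenfunction expansion in which the time dynamics decouple.

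Your modal route instead writes the adjoint state explicitly as $\psi(t)=(T-t)^{\alpha-1}E_{\alpha,\alpha}(-(T-t)^\alpha L_\mu)r$ and checks the terminal conditions with the Laplace-transform identities. Your sign bookkeeping under $t\mapsto T-t$ is right: ${}_tD_T^{\alpha}$ and ${}_tD_T^{\alpha-1}$ become the left-sided Riemann--Liouville derivatives of orders $\alpha$ and $\alpha-1$. You then read off $\int_0^T q\psi\,dt=\sum_n K_n(q)r_n\phi_n=\mathcal A^\ast r$. Within the spectral framework the paper already builds, this is fully rigorous. It gives existence of the adjoint state and a precise meaning for the adjoint problem, and it makes explicit the Fr\'echet differentiability that the paper takes for granted. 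The price is that you must fix a solution class with uniqueness, or define the solution modally, to identify the given $\psi[p]$ with your series; you correctly flag and handle this.

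A useful by-product of your computation is that $\int_0^T q\psi\,dt=\mathcal A r$. So in this time-independent, self-adjoint setting, the adjoint solve could be replaced by one forward solve with source $r(x)q(t)$ and zero initial data.
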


We now describe the conjugate gradient method for minimizing
$\mathcal J_\lambda$. To compute the regularized solution, we employ an adjoint-based
conjugate gradient iteration. Let $p^0\in L^2(\Omega)$ be an initial
guess. For the current iterate $p^k$, solve the forward problem
\begin{equation}\label{eq:forward-iteration}
\begin{cases}
{}_0^C D_t^\alpha u^k+L_\mu u^k
=
p^k(x)q(t),
& (x,t)\in\Omega\times(0,T),\\[1mm]
u^k(\cdot,0)=u_0,\quad
\partial_tu^k(\cdot,0)=u_1,
& x\in\Omega,\\[1mm]
u^k(0,t)=u^k(1,t)=0,
& t\in(0,T),
\end{cases}
\end{equation}
and define
\begin{equation}\label{eq:rk}
r^k
:=
\mathcal A p^k-\omega^\delta
=
u^k(\cdot,T)-\omega_{\mathrm{obs}}^\delta .
\end{equation}

Next, solve the adjoint problem
\begin{equation}\label{eq:adjoint-iteration}
\begin{cases}
{}_tD_T^\alpha\psi^k
+L_\mu\psi^k=0,
& (x,t)\in\Omega\times(0,T),\\[1mm]
\psi^k(0,t)=\psi^k(1,t)=0,
& t\in(0,T),\\[1mm]
\displaystyle
\lim_{t\uparrow T}
I_{T-}^{2-\alpha}\psi^k(\cdot,t)=0,
& x\in\Omega,\\[1mm]
\displaystyle
\lim_{t\uparrow T}
{}_tD_T^{\alpha-1}\psi^k(\cdot,t)=r^k,
& x\in\Omega .
\end{cases}
\end{equation}
The gradient at $p^k$ is then
\begin{equation}\label{eq:gk}
g^k
:=
\nabla\mathcal J_\lambda(p^k)
=
\int_0^T q(t)\psi^k(\cdot,t)\,dt
+
\lambda p^k .
\end{equation}
For $k=0$, we set $d^0=-g^0$. For $k\ge1$, the search direction is updated by the
Fletcher--Reeves formula
\begin{equation}\label{eq:FR-direction}
d^k
=
-g^k+\beta_kd^{k-1},
\quad
\beta_k
=
\frac{\|g^k\|_{L^2(\Omega)}^2}
{\|g^{k-1}\|_{L^2(\Omega)}^2}.
\end{equation}

To determine the step length, let $\eta^k$ be the solution of
\begin{equation}\label{eq:sensitivity-direction}
\begin{cases}
{}_0^C D_t^\alpha\eta^k+L_\mu\eta^k
=
d^k(x)q(t),
& (x,t)\in\Omega\times(0,T),\\[1mm]
\eta^k(\cdot,0)=0,\quad
\partial_t\eta^k(\cdot,0)=0,
& x\in\Omega,\\[1mm]
\eta^k(0,t)=\eta^k(1,t)=0,
& t\in(0,T).
\end{cases}
\end{equation}
Based on the definition of $\mathcal A$, we have 
\begin{equation}\label{eq:A-direction}
\eta^k(\cdot,T)=\mathcal A d^k.
\end{equation}

For $p^{k+1}=p^k+\zeta_kd^k$, 
linearity of $\mathcal A$ gives
$$
r[p^k+\zeta d^k]
=
r^k+\zeta\eta^k(\cdot,T).
$$
Consequently,
$$
\mathcal J_\lambda(p^k+\zeta d^k)
=
\frac12
\|r^k+\zeta\eta^k(\cdot,T)\|_{L^2(\Omega)}^2
+
\frac{\lambda}{2}
\|p^k+\zeta d^k\|_{L^2(\Omega)}^2.
$$
Differentiating with respect to $\zeta$ yields
$$
\begin{aligned}
\frac{d}{d\zeta}
\mathcal J_\lambda(p^k+\zeta d^k)
=
\bigl(
r^k+\zeta\eta^k(\cdot,T),
\eta^k(\cdot,T)
\bigr)_{L^2(\Omega)}
+
\lambda
(p^k+\zeta d^k,d^k)_{L^2(\Omega)} .
\end{aligned}
$$
Imposing the exact line-search condition $\frac{d}{d\zeta}\mathcal J_\lambda(p^k+\zeta d^k)=0$ gives
\begin{equation}\label{eq:stepsize}
\zeta_k
=
-
\frac{
(r^k,\eta^k(\cdot,T))_{L^2(\Omega)}
+
\lambda(p^k,d^k)_{L^2(\Omega)}
}{
\|\eta^k(\cdot,T)\|_{L^2(\Omega)}^2
+
\lambda\|d^k\|_{L^2(\Omega)}^2
}.
\end{equation}
Equivalently, since
$$
(g^k,d^k)_{L^2(\Omega)}
=
(r^k,\eta^k(\cdot,T))_{L^2(\Omega)}
+
\lambda(p^k,d^k)_{L^2(\Omega)},
$$
the step length may be written as
\begin{equation}\label{eq:stepsize-gradient}
\zeta_k
=
-
\frac{
(g^k,d^k)_{L^2(\Omega)}
}{
\|\mathcal A d^k\|_{L^2(\Omega)}^2
+
\lambda\|d^k\|_{L^2(\Omega)}^2
}.
\end{equation}
Since $\lambda>0$, the denominator in
\eqref{eq:stepsize} is strictly positive whenever $d^k\neq0$.

After computing $\zeta_k$, the source and residual can be updated by
\begin{equation}\label{eq:update-p-r}
p^{k+1}
=
p^k+\zeta_kd^k,
\quad
r^{k+1}
=
r^k+\zeta_k\eta^k(\cdot,T).
\end{equation}
The second identity follows directly from the linearity of
$\mathcal A$ and avoids an additional forward solve solely for the
purpose of updating the terminal residual.

The iteration can be terminated, for example, when the relative
gradient satisfies
\begin{equation}\label{eq:gradient-stop}
\frac{\|g^k\|_{L^2(\Omega)}}
{\|g^0\|_{L^2(\Omega)}}
\le\varepsilon,
\end{equation}
for a prescribed tolerance $\varepsilon>0$. When the noise level
$\delta$ is known, the discrepancy principle may also be used:
\begin{equation}\label{eq:discrepancy-stop}
\|r^k\|_{L^2(\Omega)}
=
\|u^k(\cdot,T)-\omega_{\mathrm{obs}}^\delta\|_{L^2(\Omega)}
\le
\tau\delta,
\quad
\tau>1.
\end{equation}

The resulting reconstruction procedure is summarized in
Algorithm \ref{alg:CG}.

\begin{algorithm}[H]
\caption{Adjoint-based conjugate gradient method for source reconstruction}
\label{alg:CG}
\begin{algorithmic}[1]
\State Choose an initial guess $p^0\in L^2(\Omega)$ and set $k=0$.
\State Solve \eqref{eq:forward-iteration} for $u^0$ and compute
$$
r^0=u^0(\cdot,T)-\omega_{\mathrm{obs}}^\delta.
$$
\State Solve the adjoint problem \eqref{eq:adjoint-iteration} with
terminal residual $r^0$.
\State Compute
$$
g^0
=
\int_0^T q(t)\psi^0(\cdot,t)\,dt+\lambda p^0,
\quad
d^0=-g^0.
$$
\While{the stopping criterion is not satisfied}
    \State Solve \eqref{eq:sensitivity-direction} with source
    $d^k(x)q(t)$ and obtain $\eta^k(\cdot,T)=\mathcal A d^k$.
    \State Compute the exact step length
    $$
    \zeta_k
    =
    -
    \frac{
    (r^k,\eta^k(\cdot,T))_{L^2(\Omega)}
    +
    \lambda(p^k,d^k)_{L^2(\Omega)}
    }{
    \|\eta^k(\cdot,T)\|_{L^2(\Omega)}^2
    +
    \lambda\|d^k\|_{L^2(\Omega)}^2
    }.
    $$
    \State Update
    $$
    p^{k+1}=p^k+\zeta_kd^k,
    \quad
    r^{k+1}
    =
    r^k+\zeta_k\eta^k(\cdot,T).
    $$
    \State Solve the adjoint problem
    \eqref{eq:adjoint-iteration} with terminal residual $r^{k+1}$.
    \State Compute
    $$
    g^{k+1}
    =
    \int_0^T
    q(t)\psi^{k+1}(\cdot,t)\,dt
    +
    \lambda p^{k+1}.
    $$
    \State Compute the Fletcher--Reeves coefficient
    $$
    \beta_{k+1}
    =
    \frac{
    \|g^{k+1}\|_{L^2(\Omega)}^2
    }{
    \|g^k\|_{L^2(\Omega)}^2
    }.
    $$
    \State Update
    $$
    d^{k+1}
    =
    -g^{k+1}+\beta_{k+1}d^k.
    $$
    \State Set $k\leftarrow k+1$.
\EndWhile
\State Output $p^k$ as the reconstructed source.
\end{algorithmic}
\end{algorithm}

\section{Numerical experiments}

To verify the effectiveness of the proposed reconstruction method, two
numerical examples are presented. In the first example, a known forcing
term is introduced to generate an analytical solution. Such a known term
does not affect the inverse source analysis, since only the separable
unknown source $p(x)q(t)$ is reconstructed. 

The forward problem is discretized by the L1 scheme in time for the Caputo
fractional derivative and the standard second-order finite difference scheme
in space. The singular operator
$$
L_\mu=-\frac{d^2}{dx^2}-\frac{\mu}{x^2}
$$
is approximated on the interior grid points, and the resulting linear system
at each time step is solved by a precomputed LU decomposition.

The adjoint problem is transformed into a forward fractional problem by the
time-reversal technique. More precisely, with the change of variable
$s=T-t$, the right-sided Riemann--Liouville fractional derivative is converted
into a left-sided Caputo derivative. A lifting technique is then introduced
to handle the fractional terminal condition associated with the terminal
observation. The resulting forward fractional problem is solved using the
same L1 discretization as the original equation.

The unknown source term is reconstructed by Algorithm \ref{alg:CG}. Here, we employ the discrepancy principle as an additional stopping
criterion for noise data. Specifically, the iteration is stopped once
$$
\|r^k\|_{L^2(\Omega)}
=
\|u^k(\cdot,T)-\omega_{\mathrm{obs}}^\delta\|_{L^2(\Omega)}
\le
\tau\delta.
$$
This criterion prevents excessive iterations and avoids overfitting the noise
contained in the terminal observation. For exact data, the iteration is terminated when the relative gradient criterion
$$
\frac{
\|\nabla J_\lambda(p^k)\|_{L^2(\Omega)}
}{
\|\nabla J_\lambda(p^0)\|_{L^2(\Omega)}
}
\leq\varepsilon
$$
is satisfied.

To evaluate the accuracy of the reconstructed source, we use the relative
$L^2(\Omega)$ error defined by
$$
E_p
=
\frac{
\|p_{\mathrm{rec}}-p_{\mathrm{true}}\|_{L^2(\Omega)}
}{
\|p_{\mathrm{true}}\|_{L^2(\Omega)}
},
$$
where $p_{\mathrm{true}}$ and $p_{\mathrm{rec}}$ denote the exact and
reconstructed source functions, respectively.

When noisy observations are considered, the terminal data are generated by
multiplicative random perturbations:
$$
\omega_{\mathrm{obs}}^\delta
=
\omega_{\mathrm{obs}}
+
\hat{\delta}(2\xi-1)\omega_{\mathrm{obs}},
$$
where $\xi$ is a random vector with independent entries uniformly distributed
on $(0,1)$, and $\hat{\delta}$ represents the relative noise level. Thus, the
perturbation magnitude is proportional to the magnitude of the exact terminal
observation. In all examples, we set $T=1$ and choose the initial guess as $p^0(x)=0$.

\textbf{Example 1} We first consider a test problem with an analytically known solution $
u(x,t)=t^2x^2\sin(\pi x)$, $x\in[0,1]$, which satisfies
$$
{}_0^CD_t^\alpha u
+
L_\mu u
=
f(x,t)+p_{\mathrm{true}}(x)q(t),
$$
where 
$$
p_{\mathrm{true}}(x)
=
x^2\sin(\pi x), \quad q(t)
=
\frac{2t^{2-\alpha}}{\Gamma(3-\alpha)},
$$
and 
$$
f(x,t)
=t^2
\Big[
(\pi^2x^2-2)\sin(\pi x)
-4\pi x\cos(\pi x)
-\mu\sin(\pi x)
\Big].
$$

Figure \ref{fig01} presents the numerical inversion results for two sets of fractional order and diffusion coefficient: $(\alpha, \mu)=(1.20, 0.15)$ and $(\alpha, \mu)=(1.60,0.20)$. The reconstructions corresponding to different noise levels are displayed, showing that the proposed method can effectively recover the source profile and preserve stability against noisy terminal observations. 

\begin{figure}[H] 
\centering
\subfloat[ $\alpha=1.2,\mu=0.15$]{\includegraphics[width=0.5\textwidth]{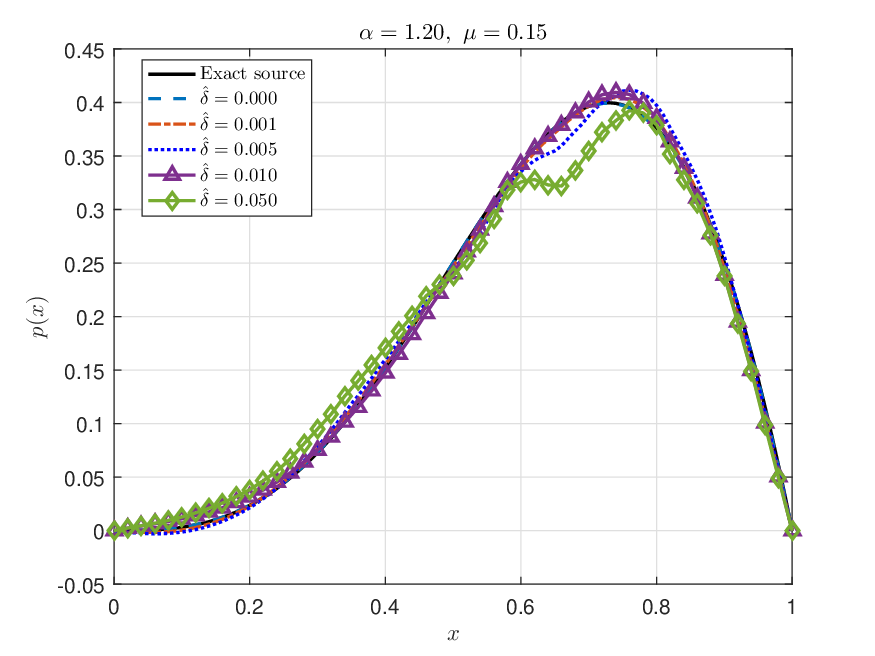}\label{fig01:sub1}}
\subfloat[ $\alpha=1.6,\mu=0.2$]{\includegraphics[width=0.5\textwidth]{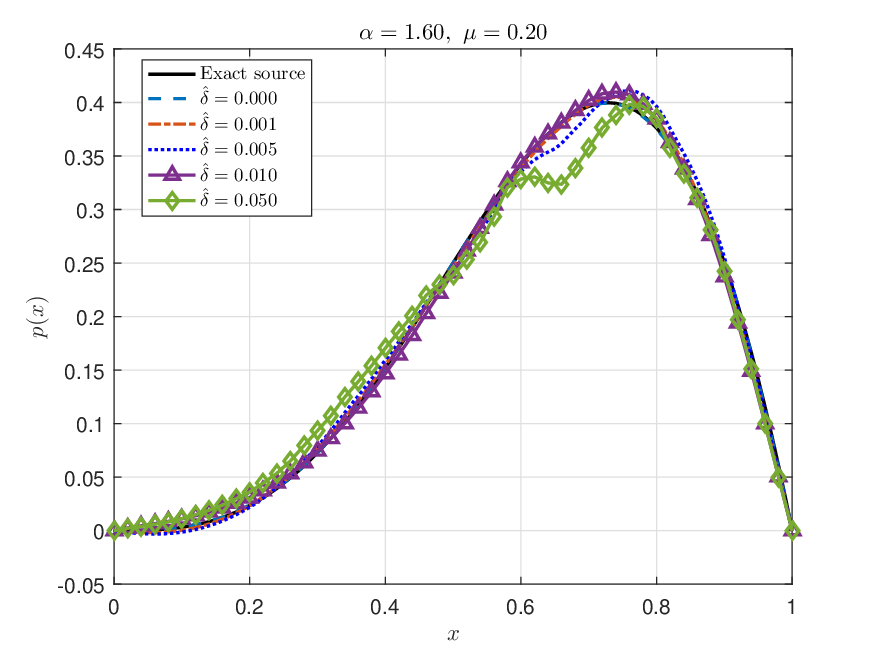}\label{fig01:sub2}}
\caption{Numerical inversions for Example 1: reconstructed source $p(x)$ versus the true solution.}\label{fig01}
\end{figure}

Table \ref{tab01} summarizes the relative reconstruction errors $E_p$ for various noise levels $\hat\delta$ alongside the corresponding regularization parameters $\lambda$. As expected, the errors increase with the noise level, yet they remain within acceptable bounds, confirming the robustness of the inversion algorithm.

\begin{table}[htbp]
\centering
\caption{Relative errors $E_p$ for different noise levels with corresponding regularization parameters in Example 1.}
\begin{tabular}{ccc|ccc}
\hline
\multicolumn{3}{c|}{$(\alpha,\mu)=(1.20,0.15)$} &
\multicolumn{3}{c}{$(\alpha,\mu)=(1.60,0.20)$} \\
\hline
$\hat{\delta}$ & $\lambda$ & $E_p$ &
$\hat{\delta}$ & $\lambda$ & $E_p$ \\
\hline
0.000 & 6.500000e-15 & 2.579009e-03 &
0.000 & 6.500000e-15 & 2.891620e-03 \\
0.001 & 6.500000e-07 & 1.828699e-02 &
0.001 & 6.500000e-07 & 1.850687e-02 \\
0.005 & 6.500000e-06 & 3.914721e-02 &
0.005 & 6.500000e-06 & 3.698948e-02 \\
0.010 & 6.500000e-05 & 3.139369e-02 &
0.010 & 6.500000e-05 & 3.158957e-02 \\
0.050 & 6.500000e-04 & 8.607583e-02 &
0.050 & 6.500000e-04 & 8.011326e-02 \\
\hline
\end{tabular}
\label{tab01}
\end{table}

\textbf{Example 2} We next consider a second test problem where the unknown spatial source term is chosen as a piecewise linear function, defined by
$$
p_{\mathrm{true}}(x) =
\begin{cases}
2x, & 0 \le x \le 0.5, \\[1mm]
2(1-x), & 0.5 < x \le 1.
\end{cases}
$$
This function belongs to $L^2(\Omega)$ but is not smooth at $x=0.5$, providing a test case for evaluating the reconstruction capability for nonsmooth source terms. The time-dependent component is simply taken as $$q(t) = 1, \quad t>0.$$ The choice $q(t)=1$ satisfies the modal nondegeneracy condition discussed in Section 4, and hence guarantees the uniqueness of the source reconstruction. The unknown field $u(x,t)$ is subject to homogeneous Dirichlet boundary conditions
$$
u(0,t) = u(1,t) = 0, \quad t>0,
$$
and zero initial conditions
$$
u(x,0) = 0, \quad \partial_t u(x,0) = 0, \quad x\in(0,1).
$$

In contrast to Example 1, this problem does not possess an analytical closed-form solution. Consequently, we generate synthetic observation data by numerically solving the forward problem using the same numerical scheme described above.  

Figure \ref{fig02} displays the reconstructed source profiles for the same two parameter sets. The results indicate that the proposed method remains capable of capturing the essential features of the nonsmooth source, even when the data are contaminated by noise.

\begin{figure}[H] 
\centering
\subfloat[ $\alpha=1.2,\mu=0.15$]{\includegraphics[width=0.5\textwidth]{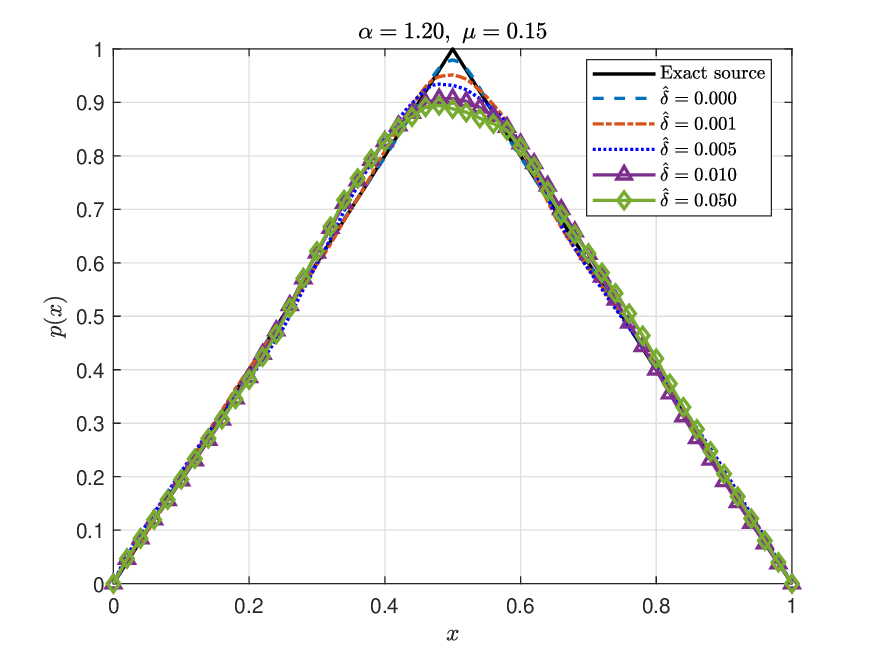}\label{fig02:sub1}}
\subfloat[ $\alpha=1.6,\mu=0.2$]{\includegraphics[width=0.5\textwidth]{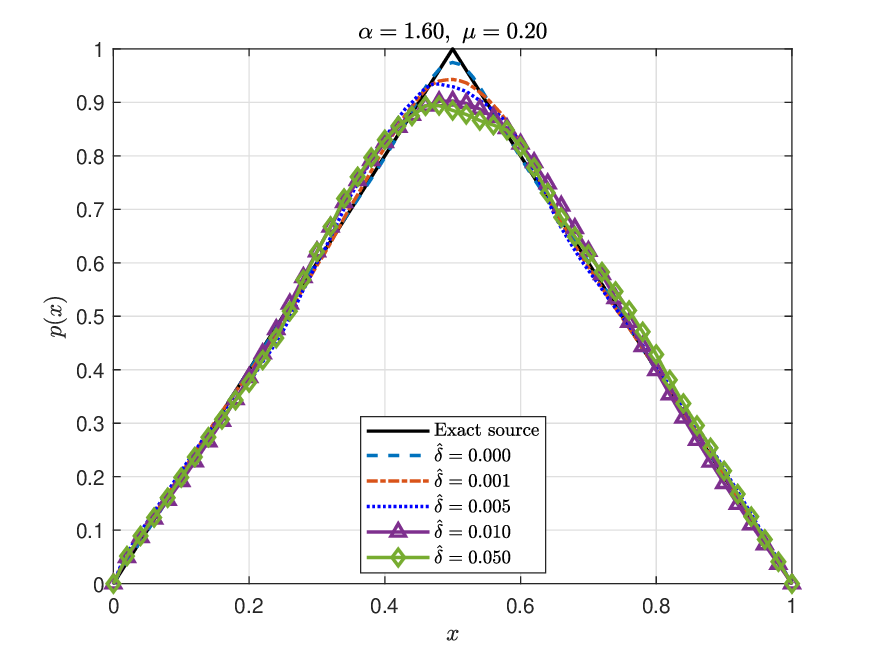}\label{fig02:sub2}}
\caption{Numerical inversions for Example 2: reconstructed source $p(x)$ versus the true piecewise linear profile.}\label{fig02}
\end{figure}

Table \ref{tab02} summarizes the reconstruction errors and regularization parameters for different noise levels in Example 2. The errors increase gradually with the noise level, as expected, yet remain at a satisfactory level. These results, together with those from Example 1, confirm that the proposed inversion algorithm is capable of handling both smooth and nonsmooth source profiles with consistent accuracy and stability.

\begin{table}[htbp]
\centering
\caption{Relative errors $E_p$ for different noise levels with corresponding regularization parameters in Example 2.}
\begin{tabular}{ccc|ccc}
\hline
\multicolumn{3}{c|}{$(\alpha,\mu)=(1.20,0.15)$} &
\multicolumn{3}{c}{$(\alpha,\mu)=(1.60,0.20)$} \\
\hline
$\hat{\delta}$ & $\lambda$ & $E_p$ &
$\hat{\delta}$ & $\lambda$ & $E_p$ \\
\hline
0.000 & 6.500000e-15 & 4.909266e-03 &
0.000 & 6.500000e-15 & 6.479352e-03 \\
0.001 & 6.500000e-09 & 1.719298e-02 &
0.001 & 6.500000e-09 & 2.098027e-02 \\
0.005 & 6.500000e-08 & 2.984921e-02 &
0.005 & 6.500000e-08 & 3.350629e-02 \\
0.010 & 6.500000e-07 & 3.900697e-02 &
0.010 & 6.500000e-07 & 4.229799e-02 \\
0.050 & 6.500000e-06 & 4.776850e-02 &
0.050 & 6.500000e-06 & 5.110032e-02 \\
\hline
\end{tabular}
\label{tab02}
\end{table}

\begin{remark}
The Hardy inequality used in the one-dimensional analysis admits a natural counterpart in higher dimensions \cite{SuD2012}. Let
$$
\Omega
=
\prod_{j=1}^{n}(0,L_j)\subset\mathbb R^n,
\quad L_j>0,
$$
and assume that the origin is a vertex of $\partial\Omega$. Then, for every $v\in H_0^1(\Omega)$,
$$
\frac{(3n-2)^2}{4}
\int_{\Omega}
\frac{|v(x)|^2}{|x|^2}\,dx
\le
\int_{\Omega}
|\nabla v(x)|^2\,dx.
$$
Hence, the spectral framework employed in the one-dimensional analysis, including the well-posedness of the direct problem, the spectral representation of the solution, and the subsequent inverse source analysis, extends to the two-dimensional rectangular setting under the above subcritical condition on $\mu<4$. This provides the theoretical basis for the two-dimensional numerical experiment presented below. The same argument also applies to rectangular domains in dimensions $n\geq 3$, with the corresponding Hardy constant $(3n-2)^2/4$. 

For comparison, when the origin is an interior point of a bounded domain $\Omega\subset\mathbb R^n$, the classical Hardy inequality for $n\geq 3$ reads 
$$
\frac{(n-2)^2}{4}
\int_\Omega
\frac{|v(x)|^2}{|x|^2}\,dx
\le
\int_\Omega |\nabla v(x)|^2\,dx,
\qquad
v\in H_0^1(\Omega),
$$
where $(n-2)^2/4$ is the optimal constant. Thus, the same analytical framework remains applicable for
$\mu<\frac{(n-2)^2}{4}$. 
In contrast, when $n=2$ and the origin is an interior point, the optimal constant in the classical Hardy inequality with weight $|x|^{-2}$ is zero. Therefore, no corresponding inequality with a positive constant holds in this case, and the above $H_0^1$-based argument does not directly apply.
\end{remark}

\textbf{Example 3.}
Let $\Omega=(0,1)\times(0,1)$. We consider
$$
{}_0^C D_t^\alpha u
-\Delta u
-\frac{\mu}{x^2+y^2}u
=
f(x,y,t)+p(x,y)q(t),
\qquad 1<\alpha<2,
$$
subject to homogeneous Dirichlet boundary conditions and zero initial data. Choose
$$
u(x,y,t)
=
t^2(x^2+y^2)\sin(\pi x)\sin(\pi y), \quad 
q(t) = \frac{2t^{2-\alpha}}{\Gamma(3-\alpha)}, 
$$
so that the exact spatial source is
$$
p^\dagger(x,y)
=
(x^2+y^2)\sin(\pi x)\sin(\pi y).
$$
The forcing term $f(x,y,t)$ is then determined by substituting the above expressions into the governing equation. The spatial source $p^\dagger(x,y)$ is reconstructed from exact and noisy terminal observations at $T=1$.

Numerical results are displayed in Figure 3 and Table 3 for two representative parameter sets: $(\alpha,\mu)=(1.20,0.15)$ and $(\alpha,\mu)=(1.60,3.3)$. The reconstructed profiles demonstrate good agreement with the true source, confirming the effectiveness of the proposed method in the two-dimensional spatial case.

\begin{table}[htbp]
\centering
\caption{Relative reconstruction errors $E_p$ for different noise levels $\hat{\delta}$ and regularization parameters $\lambda$ in Example 1.}
\begin{tabular}{ccc|ccc}
\hline
\multicolumn{3}{c|}{$(\alpha,\mu)=(1.20,0.15)$} &
\multicolumn{3}{c}{$(\alpha,\mu)=(1.60,3.30)$} \\
\hline
$\hat{\delta}$ & $\lambda$ & $E_p$ &
$\hat{\delta}$ & $\lambda$ & $E_p$ \\
\hline
0.000 & 6.500000e-15 & 2.917086e-03 &
0.000 & 6.500000e-15 & 2.758522e-03 \\
0.001 & 6.500000e-09 & 1.187930e-02 &
0.001 & 6.500000e-09 & 1.205956e-02 \\
0.005 & 6.500000e-08 & 2.578972e-02 &
0.005 & 6.500000e-08 & 2.665749e-02 \\
0.010 & 6.500000e-07 & 2.643229e-02 &
0.010 & 6.500000e-07 & 2.644666e-02 \\
0.050 & 6.500000e-06 & 6.955686e-02 &
0.050 & 6.500000e-06 & 5.326608e-02 \\
\hline
\end{tabular}
\label{tab:example1_new}
\end{table}

\begin{figure}[H] 
\centering
\subfloat[ $\alpha=1.2,\mu=0.15$]{\includegraphics[width=0.5\textwidth]{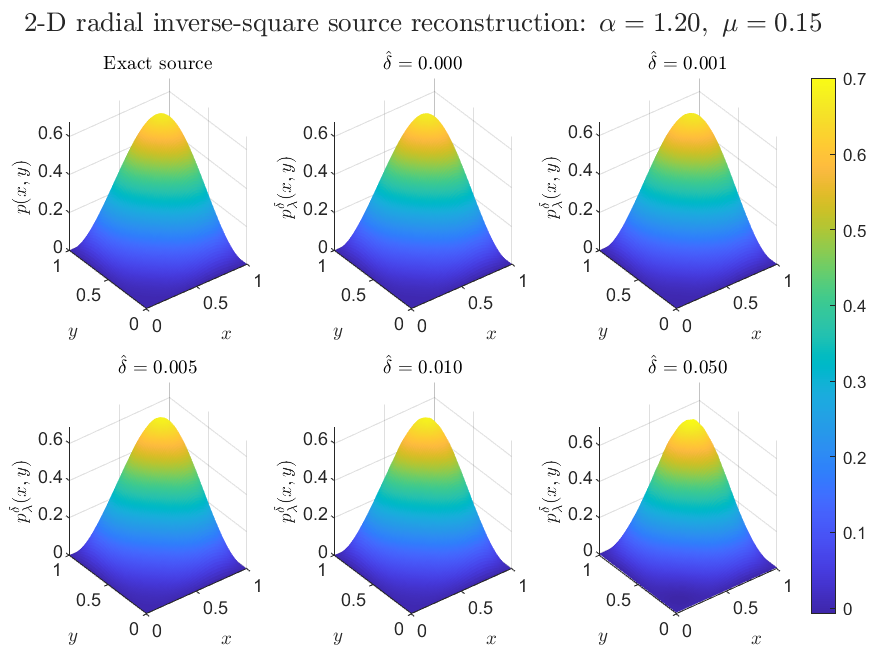}\label{fig02:sub1}}
\subfloat[ $\alpha=1.6,\mu=3.3$]{\includegraphics[width=0.5\textwidth]{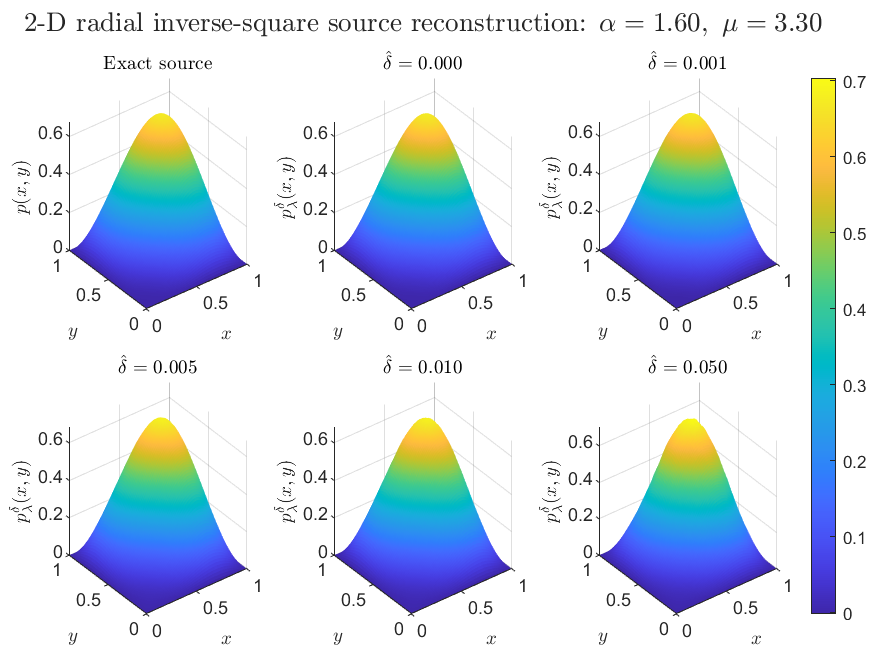}\label{fig02:sub2}}
\caption{Numerical inversions for Example 2: reconstructed source $p(x)$ versus the true piecewise linear profile.}\label{fig02}
\end{figure}

Overall, the numerical results from the three examples demonstrate that the proposed method provides accurate and stable reconstructions for both one- and two-dimensional problems, smooth and nonsmooth source profiles, and different fractional orders as well as singular potentials.

\section{Conclusion}

In this work, we investigated an inverse source problem for a time-fractional diffusion-wave equation with a singular inverse-square potential. The source term was assumed to be separable into a known temporal component and an unknown spatial component. By exploiting Hardy-type inequalities and the spectral properties of the associated singular elliptic operator, the well-posedness of the forward problem was established in an appropriate energy space.

For the inverse problem, we analyzed the terminal observation operator and proved the uniqueness of the spatial source under a modal nondegeneracy condition on the temporal factor. This condition characterizes the detectability of each eigenmode from the terminal measurement and provides a general criterion for source identifiability. To overcome the inherent ill-posedness of the reconstruction, a Tikhonov regularization strategy was developed. The Fr\'{e}chet gradient of the regularized functional was derived through an adjoint problem involving the right-sided Riemann--Liouville fractional derivative, which subsequently enabled the design of an adjoint-based conjugate gradient method with an exact line search for numerical reconstruction. The performance of the proposed method was evaluated through three numerical examples, covering both one- and two-dimensional spatial domains, smooth and nonsmooth source profiles, various fractional orders, and different singular potential parameters. Reconstruction results from both exact and noisy terminal data demonstrate that the method consistently yields accurate and stable recovery of the unknown source, confirming its effectiveness and robustness for practical inverse problems in fractional diffusion systems.

Future work will focus on extending the proposed approach to more general geometries, variable coefficients, and other types of incomplete measurements, as well as developing more efficient numerical schemes for large-scale inverse problems.

\section*{Acknowledgments}
This work is supported by National Natural Science Foundation of China (12261004), Guangdong Basic and Applied Basic Research Foundation (2025A1515012248), Innovation Team Project of Regular Universities in Guangdong Province (2025KCXTD037). The third author is supported by JSPS KAKENHI Grant Numbers JP23KK0049, JP26K06926 and FY2025 MUSUBIME of Kyoto University.

\vspace{0.5cm}

\noindent \textbf{Conflict of interest:} The authors declare that there is no conflict of interest regarding this submitted manuscript.


\begin{thebibliography}{10}

\bibitem{SchneiderWyss1989}
Schneider W R, Wyss W.
Fractional diffusion and wave equations.
Journal of Mathematical Physics, 1989, 30(1): 134-144.

\bibitem{SakamotoYamamoto2011}
Sakamoto K, Yamamoto M.
Initial value/boundary value problems for fractional diffusion-wave
equations and applications to some inverse problems.
Journal of Mathematical Analysis and Applications,
2011, 382(1): 426-447.

\bibitem{JinB2023} Jin B, Zhou Z. Numerical treatment and analysis of time-fractional
evolution equations, volume 214. Springer, 2023.
        
\bibitem{JinRundell2015}
Jin B, Rundell W.
A tutorial on inverse problems for anomalous diffusion processes.
Inverse Problems, 2015, 31(3): 035003.
                    
\bibitem{JiangD2020}
Jiang D, Liu Y, Wang D. Numerical reconstruction of the spatial component in the source term of a time-fractional diffusion equation. Advances in Computational Mathematics, 2020, 46(3): 43.


\bibitem{WangZ2023} Wang Z, Qiu S, Yu S, et al.  Exponential
tikhonov regularization method for solving an inverse source problem of time
fractional diffusion equation. Journal of Computational Mathematics, 2023, 41(2):173–190.

\bibitem{WeiT2022} Wei T, Luo Y. A generalized quasi-boundary value method for
recovering a source in a fractional diffusion-wave equation. Inverse Problems, 2022, 
38(4):045001.

\bibitem{LuoY2024}
Luo Y, Wei T. Uniqueness and numerical method for determining a spatial source term in a time-fractional diffusion wave equation[J]. Journal of Scientific Computing, 2024, 99(2): 51.

\bibitem{BarasGoldstein1984}
Baras P, Goldstein J A.
The heat equation with a singular potential.
Transactions of the American Mathematical Society,
1984, 284(1): 121-139.

\bibitem{VazquezJL2000} Vazquez J L, Zuazua E. The Hardy inequality and the asymptotic behaviour of the heat equation with an inverse-square potential. Journal of Functional Analysis, 2000, 173(1): 103-153.
    
\bibitem{SuD2012} Su D, Yang Q H. On the best constants of Hardy inequality in $\mathbb{R}^{n-k}\times
    (\mathbb{R}_+)^k$ and related improvements[J]. Journal of Mathematical Analysis and Applications, 2012, 389(1): 48-53.    

\bibitem{VancostenobleJ2011} Vancostenoble J. Lipschitz stability in inverse source problems for singular parabolic equations. Communications in Partial Differential Equations, 2011, 36(8): 1287-1317.

\bibitem{CazacuC2014} Cazacu C. Controllability of the heat equation with an inverse-square potential localized on the boundary. SIAM Journal on Control and Optimization, 2014, 52(4): 2055-2089.

\bibitem{AlaouiM2021} Alaoui M, Hajjaj A, Maniar L, Salhi J. Inverse problem for a degenerate/singular parabolic system with Neumann boundary conditions[J]. Journal of Inverse and Ill-posed Problems, 2021, 29(6): 791-821. 

\bibitem{AnhCT2022} Anh C T, Toi V M, Tuan T Q. Lipschitz stability in inverse source problems for a singular parabolic equation. Applicable Analysis, 2022, 101(8): 2805-2824. 


\bibitem{QinX2023} Qin X, Li S. Local logarithmic stability of an inverse coefficient problem for a singular heat equation with an inverse-square potential. Applicable Analysis, 2023, 102(7): 1995-2017. 

\bibitem{QinX2025} Qin X, Li S. Inverse source problem for a singular parabolic equation with variable 
coefficients. Mathematics, 2025, 13(10): 1678.
	
\bibitem{NedjmaM2025} Nedjma M, Chattouh A. Inverse source recovery in a class of singular diffusion equations via optimal control. Journal of Mathematics, Mechanics \& Computer Science, 2025, 127(3): 136.
    
\bibitem{MaChen2024}
Ma S, Chen H.
Efficient finite difference/spectral approximation for the
time-fractional diffusion equation with an inverse square potential
on the unit ball.
Computers \& Mathematics with Applications,
2024, 167: 232-238.
    

\bibitem{ChattouhA} Chattouh A, Nedjma M, Sidi H O. On the uniqueness and numerical reconstruction of spatial source term in a time-fractional diffusion model with singular inverse-square potential, Mathematical Modelling and Analysis, Accepted. 


\bibitem{KilbasAA2006} Kilbas A A, Srivastava H M, Trujillo J J. Theory and applications of fractional differential equations. Amsterdam: elsevier, 2006.


	
\end{thebibliography}

\end{document}